\documentclass[11pt, reqno]{article}

\usepackage[utf8]{inputenc}
\usepackage[T1]{fontenc}
\usepackage[english]{babel}
\usepackage{lmodern}

\usepackage{geometry}
\usepackage[backend=biber, maxbibnames=99]{biblatex}
\usepackage{csquotes}
\usepackage{url}

\usepackage{mathtools}
\usepackage{amssymb}
\usepackage{amsthm}
\usepackage{wasysym}
\usepackage{latexsym}

\usepackage{setspace}

\usepackage{nicefrac}

\usepackage{relsize}

\usepackage{extarrows}
\usepackage{stmaryrd}

\usepackage[new]{old-arrows}

\usepackage{accents} 

\usepackage{graphicx}
\usepackage{float}
\usepackage{floatflt}
\usepackage{rotating}
\usepackage{color}
\usepackage[absolute]{textpos}

\usepackage{here} 

\usepackage{epic}
\usepackage[percent]{overpic}
\usepackage{caption}
\usepackage{subcaption}
\usepackage{capt-of}

\usepackage{tikz}
\usetikzlibrary{matrix}
\usetikzlibrary{cd}

\tikzset{commutative diagrams/.cd,
	mysymbol/.style={start anchor = center, end anchor = center, draw = none}
}

\newsavebox{\pullbacksquare}
\sbox\pullbacksquare{%
	\begin{tikzpicture}%
		\draw (0,0) -- (1ex,0ex);%
		\draw (1ex,0ex) -- (1ex,1ex);%
\end{tikzpicture}}

\usepackage{adjustbox}

\usepackage{xcolor}
\definecolor{green-new}{rgb}{0.0,0.5,0.0}
\definecolor{green-variant}{rgb}{0.1,0.7,0.2}

\usepackage{multirow}

\usepackage{enumitem}

\usepackage{chngcntr}
\counterwithin{figure}{section}
\counterwithin{table}{section}

\usepackage{scalerel}
\usepackage[usestackEOL]{stackengine} 

\usepackage{xparse}

\usepackage{needspace} 

\usepackage{comment}

\usepackage[linktoc=all, bookmarks, hyperfootnotes=false, psdextra=true]{hyperref}

\hypersetup{
	colorlinks	= true, 
	linkcolor	= green-variant,
	urlcolor	= blue,
	citecolor	= blue,
}

\usepackage{cleveref}

\usepackage{thmtools}

\crefformat{footnote}{#2\footnotemark[#1]#3} 

\newcommand{\N}{\mathbb{N}}

\newcommand{\cC}{\mathcal{C}}

\newcommand{\cG}{\mathcal{G}}

\newcommand{\cP}{\mathcal{P}}

\newcommand{\wt}[1]{\widetilde{#1}}

\newcommand{\co}{\colon\thinspace}
\newcommand{\comp}{\mathbin{\circ}} 

\newcommand{\abs}[1]{\left\lvert #1 \right\rvert}

\newcommand{\id}[1]{\textup{\textsf{id}}_{#1}} % id for identity.
\newcommand{\Set}{\textup{\textsf{Set}}}
\newcommand{\timeT}{\textup{\textsf{T}}}
\newcommand{\timeS}{\textup{\textsf{S}}}
\newcommand{\Pe}{\textup{\textsf{Pe}}}
\newcommand{\Cu}{\textup{\textsf{Cu}}}

\newcommand{\Grph}{\textup{\textsf{Grph}}}
\newcommand{\Grefl}{\textup{\textsf{Grph}}_\textup{\textsf{refl}}}

\newcommand{\intJ}{\int\!J}

\DeclareFontFamily{U}{mathx}{}
\DeclareFontShape{U}{mathx}{m}{n}{<-> mathx10}{}
\DeclareSymbolFont{mathx}{U}{mathx}{m}{n}
\DeclareMathAccent{\widehat}{0}{mathx}{"70}
\DeclareMathAccent{\widecheck}{0}{mathx}{"71}

\newcommand{\Ob}{\textup{Ob}}
\newcommand{\op}{\textup{op}}

\makeatletter
\newcommand*\bigcdot{\mathpalette\bigcdot@{.5}}
\newcommand*\bigcdot@[2]{\mathbin{\vcenter{\hbox{\scalebox{#2}{$\m@th#1\bullet$}}}}}
\makeatother

\newcommand\sBig[1]{\scalerel*[5.5pt]{\Big#1}{
		\ensurestackMath{\addstackgap[1.5pt]{\big#1}}}} 
\newcommand\sBigl[1]{\mathopen{\sBig{#1}}}
\newcommand\sBigr[1]{\mathclose{\sBig{#1}}}

\makeatletter
\newsavebox\myboxA
\newsavebox\myboxB
\newlength\mylenA
\newcommand*\loverline[2][0.75]{%
	\sbox{\myboxA}{$\m@th#2$}%
	\setbox\myboxB\null% Phantom box
	\ht\myboxB=\ht\myboxA%
	\dp\myboxB=\dp\myboxA%
	\wd\myboxB=#1\wd\myboxA% Scale phantom
	\sbox\myboxB{$\m@th\overline{\copy\myboxB}$}%  Overlined phantom
	\setlength\mylenA{\the\wd\myboxA}%   calc width diff
	\addtolength\mylenA{-\the\wd\myboxB}%
	\ifdim\wd\myboxB<\wd\myboxA%
	\rlap{\hskip 0.5\mylenA\usebox\myboxB}{\usebox\myboxA}%
	\else
	\hskip -0.5\mylenA\rlap{\usebox\myboxA}{\hskip 0.5\mylenA\usebox\myboxB}%
	\fi}
\makeatother

\makeatletter
\let\save@mathaccent\mathaccent
\newcommand*\if@single[3]{%
	\setbox0\hbox{${\mathaccent"0362{#1}}^H$}%
	\setbox2\hbox{${\mathaccent"0362{\kern0pt#1}}^H$}%
	\ifdim\ht0=\ht2 #3\else #2\fi
}
\newcommand*\rel@kern[1]{\kern#1\dimexpr\macc@kerna}
\newcommand*\widebar[1]{\@ifnextchar^{{\wide@bar{#1}{0}}}{\wide@bar{#1}{1}}}
\newcommand*\wide@bar[2]{\if@single{#1}{\wide@bar@{#1}{#2}{1}}{\wide@bar@{#1}{#2}{2}}}
\newcommand*\wide@bar@[3]{%
	\begingroup
	\def\mathaccent##1##2{%
		\let\mathaccent\save@mathaccent
		\if#32 \let\macc@nucleus\first@char \fi
		\setbox\z@\hbox{$\macc@style{\macc@nucleus}_{}$}%
		\setbox\tw@\hbox{$\macc@style{\macc@nucleus}{}_{}$}%
		\dimen@\wd\tw@
		\advance\dimen@-\wd\z@
		\divide\dimen@ 3
		\@tempdima\wd\tw@
		\advance\@tempdima-\scriptspace
		\divide\@tempdima 10
		\advance\dimen@-\@tempdima
		\ifdim\dimen@>\z@ \dimen@0pt\fi
		\rel@kern{0.6}\kern-\dimen@
		\if#31
		\overline{\rel@kern{-0.6}\kern\dimen@\macc@nucleus\rel@kern{0.4}\kern\dimen@}%
		\advance\dimen@0.4\dimexpr\macc@kerna
		\let\final@kern#2%
		\ifdim\dimen@<\z@ \let\final@kern1\fi
		\if\final@kern1 \kern-\dimen@\fi
		\else
		\overline{\rel@kern{-0.6}\kern\dimen@#1}%
		\fi
	}%
	\macc@depth\@ne
	\let\math@bgroup\@empty \let\math@egroup\macc@set@skewchar
	\mathsurround\z@ \frozen@everymath{\mathgroup\macc@group\relax}%
	\macc@set@skewchar\relax
	\let\mathaccentV\macc@nested@a
	\if#31
	\macc@nested@a\relax111{#1}%
	\else
	\def\gobble@till@marker##1\endmarker{}%
	\futurelet\first@char\gobble@till@marker#1\endmarker
	\ifcat\noexpand\first@char A\else
	\def\first@char{}%
	\fi
	\macc@nested@a\relax111{\first@char}%
	\fi
	\endgroup
}
\makeatother

\makeatletter
\@ifdefinable\@latex@chi{\let\@latex@chi\chi}
\renewcommand*\chi{{\@latex@chi\smash[t]{\mathstrut}}} % want only bottom half of \mathstrut
\makeatletter

\DeclareMathOperator*{\colim}{colim}

\DeclareMathAlphabet{\mathdutchcal}{U}{dutchcal}{m}{n}
\SetMathAlphabet{\mathdutchcal}{bold}{U}{dutchcal}{b}{n}
\DeclareMathAlphabet{\mathdutchbcal}{U}{dutchcal}{b}{n}

\DeclareMathAlphabet{\newmathbb}{U}{BOONDOX-ds}{m}{n}

\numberwithin{equation}{section}

\theoremstyle{definition}
\newtheorem{defi}[equation]{Definition}

\newtheorem{rmk}[equation]{Remark}
\newtheorem{eg}[equation]{Example}

\theoremstyle{plain}
\newtheorem{thm}[equation]{Theorem}
\newtheorem{prop}[equation]{Proposition}
\newtheorem{lemma}[equation]{Lemma}

\newtheorem{conj}[equation]{Conjecture}

\allowdisplaybreaks[1]

\begin{document}

\title{\normalfont Decomposing time-varying data into simple pieces: structured decompositions of narratives}

\author{
	Benjamin Merlin Bumpus\footnote{Instituto de Matemática e Estatística, Universidade de São Paulo. Rua do Matão, 1010 — 05508–090, São Paulo, SP, Brasil}
	\and Jana K. Nickel\footnote{Department of Mathematics, University of Hamburg, Germany}
}

\date{}

\maketitle

\begin{abstract}
	Graphs that change over time arise throughout applications, but there is no single standard way to decompose them into smaller pieces. In this paper, we propose a systematic categorical method for doing so. The main idea is to combine structured decompositions, which generalize graph decompositions, such as tree-decompositions, with persistent narratives, which model time-varying data as diagrams. We prove that, under suitable categorical hypotheses, any static theory of decompositions can be lifted to a corresponding temporal theory. As case studies, we apply this construction to time-varying graphs and recover natural temporal analogues of ordinary tree-width, complemented tree-width, and the tree-independence number.
\end{abstract}

\subsection*{Funding}
Benjamin Merlin Bumpus was supported by the São Paulo Research Foundation (FAPESP), grant 2025/16921-5.

{
	\hypersetup{linkcolor=black}
	\tableofcontents
}

\section{Introduction}

In graph theory and computer science, it is very common to study a large graph by splitting it into simpler pieces. This is useful for algorithmic applications, such as Courcelle's Theorem~\cite{Courcelle}, and constitutes an essential method in structural proofs. For instance, decompositions play a central role in Robertson and Seymour's celebrated graph structure theorem~\cite{Robertson-Seymour_Graph-minors-XVII}. We refer to such methods broadly as graph decompositions.

\medskip

These ideas are instances of the overarching principle of compositionality: the behaviour of a complex system should be determined by the behaviours of its constituent parts together with the rules by which these parts are connected. The relevance of this principle is reflected in a variety of works in applied category theory, including, for example, \cite{Janssen-Partee, Werning-Hinzen-Machery, Fong, Baez-Pollard, Pollard, Szabo-Thomason, Cicala, Baez-Courser, Baez-Master, Courser, Master, Baez-Courser-Vasilakopoulou, Libkind-et-al, Baez-et-al, Patterson}.

\medskip

To provide a principled way of decomposing arbitrary objects, the authors of~\cite{Bumpus-et-al_Structured-Decompositions} introduced structured decompositions, which give a category-theoretic generalization of tree-decompositions and allow for a systematic treatment of decompositions of data structures. This is part of a broader programmatic question:
\begin{quotation}
	\noindent \textit{Is there a right way of developing theories of decompositions for arbitrary objects of a category?}
\end{quotation}
The aim of the present paper is to give evidence that structured decompositions are candidates for such a method. We do this by applying them to time-varying graphs. More precisely, we show that, starting from an ordinary static theory of decompositions, one can systematically produce a corresponding temporal theory. This procedure recovers natural temporal analogues of familiar graph width parameters and, at the same time, it produces new candidates that arise directly from the general framework.

\medskip

We choose time-varying graphs as our case study for two reasons. First, temporal data is ubiquitous: Most data is not genuinely static, but changes with time. Examples include satellite data, social networks, communication networks, and many forms of biological and epidemiological data. Second, although the theory of time-varying graphs is relatively recent, it has already attracted substantial interest~\cite{Harary-Gupta, Kempe-et-al, Casteigts-et-al, Holme-Saramaeki, Holme, Michail}. This makes it a useful testing ground for a programmatic theory of decompositions. The field is not yet so mature that one can simply fit a general framework to a large collection of established definitions; nevertheless, there are enough natural examples (which we mention below) to test whether the framework recovers the expected first attempts. Our results show that it does.

\medskip

The study of width measures for temporal graphs is still at an early stage, but several recent works already indicate why the problem is subtle. Fluschnik, Molter, Niedermeier, Renken and Zschoche explicitly discuss the difficulty of finding temporal analogues of tree-width and survey several possible approaches and pitfalls~\cite{Fluschnik-et-al}. More recent work has proposed different ways of incorporating time into structural parameters. For instance, Bui-Xuan, Krasnopol, Monasson and Sznajder introduce a derivative construction for temporal graphs and study tree-width and twin-width of this derivative, obtaining model-checking results that avoid using the lifetime itself as a parameter~\cite{Bui-Xuan-et-al}. Enright, Hand, Larios-Jones and Meeks take a different direction, introducing temporal analogues of clique-width, modular-width and neighbourhood diversity for dense temporal graphs~\cite{Enright-et-al_Dense-Temporal-Graphs}, as well as vertex-interval-membership width (introduced by Bumpus and Meeks~\cite{Bumpus-Meeks}) and tree-interval-membership width, which control how much vertex or component activity must be remembered over time~\cite{Enright-et-al_vertex-interval-membership-width}. These parameters have also been used in logical meta-theorems for model checking of first-order and monadic second-order logic
on temporal graphs~\cite{Doering-et-al}. Thus, rather than forming a settled theory, the existing literature provides a small but informative collection of approaches, each emphasizing a different aspect of temporal structure. Our contribution is complementary: We do not claim to identify the definitive temporal analogue of tree-width. Instead, we give a systematic categorical procedure which turns static decomposition theories into temporal ones. In this sense, our framework is closest in spirit to approaches such as derivatives and interval-membership widths, where temporal structure is incorporated into the object being decomposed rather than treated merely as an external parameter.

\medskip

Working directly with temporal graphs as they are typically defined in the literature is difficult. There are many different representations of time-varying data, coming from different fields and motivated by different applications. Moreover, many of these definitions are quite combinatorial, which can make them difficult to generalize. We therefore use the language of persistent narratives: a sheaf-theoretic model of time-varying data. This perspective is especially convenient here because both persistent narratives and structured decompositions are naturally expressed in diagrammatic language. As a result, the two frameworks combine cleanly.

\medskip

We can now state the goals and results of this paper more precisely. We apply structured decompositions to categories of persistent narratives. When we instantiate this construction in graphs, we obtain temporal analogues of several standard graph width notions, including ordinary tree-width, complemented tree-width, and the tree-independence number. Moreover, the construction applies in rather general terms: the main results do not depend on special features of graphs, but on categorical conditions ensuring that structured decompositions and persistent narratives interact well. This is formalized in Theorems~\ref{thm:termporalized-sd-category} and~\ref{thm:temporalization-of-spined-sd-category}.

\medskip

The paper is organized as follows. Section~\ref{section:theoretical-framework} recalls the two ingredients used throughout the paper: persistent narratives and spined structured decomposition categories. In Section~\ref{section:temporalization}, we combine these ingredients and prove the main temporalization theorem. In the following Section~\ref{section:case-studies}, we specialize the general theorem to graph-theoretic case studies, obtaining temporal versions of ordinary tree-width, complemented tree-width, and the tree-independence number. We conclude in Section~\ref{section:conclusion} with directions for future work.

\paragraph{Acknowledgements.}
We would like to thank Daniel Rosiak for helpful initial discussions concerning the sheafification of presheaves of C-sets. The first author would also like to thank the organizers and participants of Dagstuhl Seminar 26251, \emph{Temporal Graphs: Structure, Algorithms, Applications}, for the many insightful discussions that contributed to the development of the ideas in this paper.

\section{Theoretical framework}
\label{section:theoretical-framework}

In this chapter, we will provide background knowledge on basic definitions and concepts with respect to persistent narratives and structured decompositions which we will use throughout.

\subsection{Persistent narratives}
\label{subsection:narratives}

We begin with the definition of persistent narratives, following \cite[Section~2]{Bumpus-et-al_Time-Varying}.
We write $\Set$ for the usual category of sets and maps.

\begin{defi}
	Let $\textup{\textsf{I}}$ denote the subcategory of~$\Set$ whose objects are closed bounded discrete intervals $[a,b]$ in~$\N_0$, that is, subsets $[a,b]\coloneqq\{a,a+1,\dots,b\}\subseteq\N_0$ for integers $0\leq a\leq b$, and whose morphisms are inclusion maps $[a,b]\hookrightarrow[c,d]$ for any integers $0\leq c\leq a\leq b\leq d$. We may regard $\textup{\textsf{I}}$ as a join-semilattice with respect to the partial order of set inclusion where the join of two discrete intervals $[a,b]$ and $[c,d]$ is the discrete interval $[\min\{a,c\},\max\{b,d\}]$.	
\end{defi}

\begin{defi}
	A \textit{discrete time category}~$\timeT$ is a sub-join-semilattice of~$\textup{\textsf{I}}$. We usually refer to its objects as time intervals or merely as intervals (suppressing the adjective ``discrete'').
	We say that $\timeT$ is \textit{finite} iff it contains only finitely many objects.
	An example of a finite discrete time category is illustrated below.
	
	\begin{figure}[H]
		\centering
		
		\begin{equation*}
			\begin{tikzcd}[row sep=small, column sep=small]
				& & & {[0,3]}
				\\
				& & {[0,2]} \arrow[ru, hook] & & {[1,3]} \arrow[lu, hook]
				\\
				& {[0,1]} \arrow[ru, hook] & & {[1,2]} \arrow[lu, hook] \arrow[ru, hook] & & {[2,3]} \arrow[lu, hook]
				\\
				{[0,0]} \arrow[ru, hook] & & {[1,1]} \arrow[lu, hook] \arrow[ru, 	hook] & & {[2,2]} \arrow[lu, hook] \arrow[ru, hook] & & {[3,3]} \arrow[lu, hook]
			\end{tikzcd}
		\end{equation*}
		\caption{A schematic visualization of an example of a finite discrete time category.}
		\label{fig:finite-discrete-time-category}
	\end{figure}
\end{defi}

\begin{defi}\label{defi:narratives}
	Let $\cC$ be a category which admits all pullbacks.
	A \textit{$\cC$-valued persistent narrative with $\timeT$-time}, or a \textit{persistent narrative on~$\timeT$ with values in~$\cC$}, is a presheaf $F\co \timeT^\op\to\cC$ taking any square in~$\timeT$ of the form
	\begin{equation*}
		\begin{tikzcd}
			{[p,p]} \arrow[r, hook] \arrow[d, hook] & {[p,b]} \arrow[d, hook]
			\\
			{[a,p]} \arrow[r, hook] & {[a,b]}
		\end{tikzcd}
	\end{equation*} 
	to a pullback square
	\begin{equation*}
		\begin{tikzcd}
			F([a,b]) \arrow[r] \arrow[d] \arrow[rd, phantom, "\mathlarger{\lrcorner}", very near start] 
			& F([p,b]) \arrow[d]
			\\
			F([a,p]) \arrow[r] 
			& F([p,p])
		\end{tikzcd}
	\end{equation*}
	in the category~$\cC$.
	We denote by $\Pe(\timeT,\cC)$ the full subcategory of the category $[\timeT^\op,\cC]$ of presheaves $\timeT^\op\to\cC$ whose objects are the $\cC$-valued persistent narratives with $\timeT$-time. Hence, the morphisms of $\Pe(\timeT,\cC)$ from $F$ to~$F'$ are the usual natural transformations $\varphi\co F\Rightarrow F'\co \timeT^\op\to\cC$.

\end{defi}

\begin{rmk}
	In~\cite{Bumpus-et-al_Time-Varying}, it is shown that persistent narratives are the same as sheaves $\timeT^\op\to\cC$ where the time category~$\timeT$ carries the Johnstone coverage as introduced in~\cite{Johnstone_coverage}, see also \cite[Section~3.2]{Schultz-et-al_Dynamical}, which makes $\timeT$ into a site.
	Specifically, the Johnstone covering families over an interval $[a,b]\in\Ob(\timeT)$ are pairs of inclusions
	\[ [a,p]\hookrightarrow[a,b] \quad \text{and} \quad [p,b]\hookrightarrow[a,b] \]
	for all integers $p\in[a,b]$.
	Spelling out the corresponding sheaf conditions, one obtains the concrete description of Definition~\ref{defi:narratives} \cite[Proposition~2.7]{Bumpus-et-al_Time-Varying}. For our concerns, this hands-on description turns out to be more convenient than the abstract sheaf-theoretic presentation of persistent narratives.
\end{rmk}

\subsection{Structured decompositions and spined sd-categories}

The definition of a structured decomposition, Definition~\ref{defi:structured-decomposition}, involves the notion of a graph, so to begin with, let us clarify what we understand by a graph.

\begin{defi}
	By a \textit{graph}, we mean a finite simple undirected graph, that is, a pair $J=(V(J),E(J))$ of finite sets where the elements of $E(J)$ are two-elements subsets of~$V(J)$. The elements of $V(J)$ are called the \textit{vertices} of~$J$ and those of $E(J)$ are the \textit{edges} of~$J$. Given an edge $vw\coloneqq\{v,w\}$, we call $v$ and~$w$ its \textit{endvertices}. Note that we do not allow loops (edges consisting of one single vertex) or multiple edges (different edges with the same endvertices). 
	A \textit{graph morphism} is a map of vertex sets preserving adjacency.
	Graphs and morphisms form a category, which we denote by $\Grph$.
\end{defi}

\paragraph{Tree-decompositions}

To acknowledge the benefit of structured decompositions, Definition~\ref{defi:structured-decomposition}, we should first say a few words about ordinary tree-decompositions.
Originally initiated by Halin in 1976~\cite{Halin}, the notion of a tree-decomposition, Definition~\ref{defi:tree-decomposition}, has been rediscovered in 1984 by Robertson and Seymour~\cite{Robertson-Seymour_Graph-minors-III} and since then plays an essential role both within structural and extremal graph theory.
Robertson and Seymour themselves used the concept of a tree-decomposition as a crucial component within their famous graph structure theorem~\cite{Robertson-Seymour_Graph-minors-XVII}, which provides a profound relation between their graph minor theory and the theory of topological embeddings.

Intuitively, a tree-decomposition of a graph reveals its global structure in the case where this is tree-like:

\begin{defi}\label{defi:tree-decomposition}
	A \textit{tree-decomposition} of a graph~$G$ is a pair $(T, (V_t)_{t\in\Ob(T)})$ consisting of a tree~$T$ and a family of vertex sets $V_t\subseteq V(G)$ indexed by the vertices $t$ of~$T$ and satisfying the following two conditions:
	\begin{enumerate}[label=(T\theenumi), leftmargin=35pt]
		\item $G=\bigcup_{t\in\Ob(T)}G[V_t]$ where $G[V_t]$ denotes the subgraph of~$G$ induced by~$V_t$;
		\item For each vertex $v$ of~$G$, the induced subgraph $T_v := T[\{t\in T\mathbin| v\in V_t\}]\subseteq T$ is connected.
	\end{enumerate}
	We call $T$ the \textit{decomposition tree} and the subsets $V_t\subseteq V(G)$ the \textit{bags} of the decomposition.
\end{defi}

An example of a tree-decomposition is illustrated in Figure~\ref{fig:tree-decomposition}.
	\begin{figure}[H]
		\centering
		\includegraphics[width=0.8\textwidth]{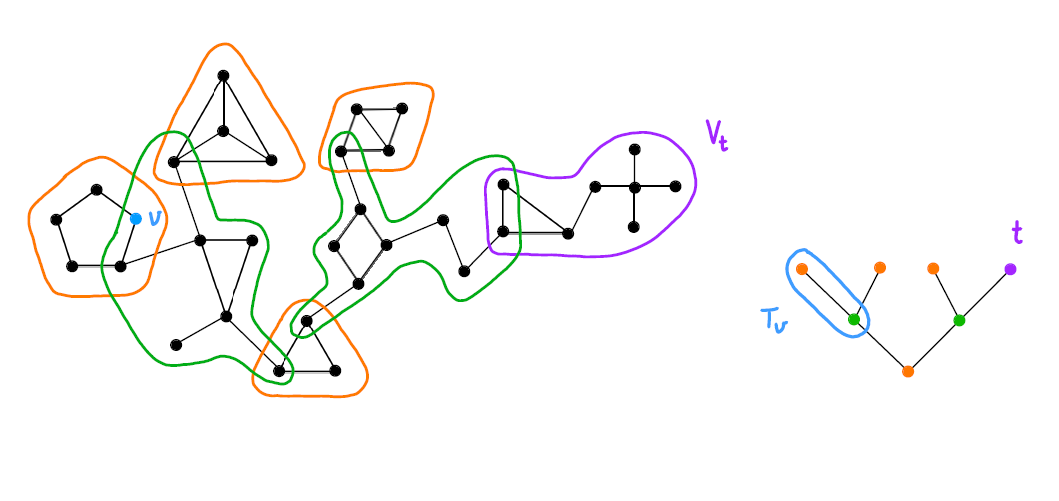}
		\setlength{\abovecaptionskip}{-1cm}
		\caption{A tree-decomposition of a graph (on the left-hand side) and its decomposition tree (on the right-hand side).}
		\label{fig:tree-decomposition}
	\end{figure}

\paragraph{Tree-width.}
\label{paragraph:tree-width}

Tree-decompositions provide a handy tool to constructively compute the \textit{tree-width} of a graph~$G$, which measures the degree to which extent the structure of~$G$ resembles the structure of a tree.

\begin{defi}\label{defi:width-tree-width}
	Let $G$ be a graph.
	The \textit{width} of a tree-decomposition $(T, (V_t)_{t\in\Ob(T)})$ is the maximum order of its bags minus one,
	$\max_{t\in V(T)} \abs{V_t}-1$.
	The \textit{tree-width} $\textup{tw}(G)$ of~$G$ is defined as the minimum width of its tree-decompositions,
	\[ \textup{tw}(G) \coloneqq \min_{(T,(V_t)_{t\in\Ob(T)})} \, \max_{t\in V(T)} \abs{V_t}-1 \]
	where the minimum is taken over all tree-decompositions of~$G$.
\end{defi}
The reason for the ``minus one'' in the definition of the width of a tree-decomposition is to guarantee that the tree-width of any tree itself is precisely one, as we would expect from a reasonable measure of structural tree-resemblance.

\subsubsection{Structured decompositions}
\label{subsubsection:Structured-decompositions}

From a category theoretical point of view, tree-decompositions are instances of structured decompositions, which we will now define, basing on \cite[Section~2]{Bumpus-et-al_Structured-Decompositions}.

\begin{defi}
	Let $J$ be a graph.
	Its \textit{barycentric subdivision} is the category~$\intJ$ defined as follows. The set of objects is the disjoint union $V(J)\sqcup E(J)$ of the vertex set and the edge set of~$J$. Apart from the identity morphisms, $\intJ$~has precisely two morphisms $e_v\co e\to v$ and $e_w\co e\to w$ for each edge $e=vw\in E(J)$. 
	In other words, the category~$\intJ$ is obtained from the graph~$J$ by replacing every vertex of~$J$ by an object and every edge~$e$ of~$J$ with endvertices $v$ and~$w$ by an additional object~$e$ together with a span 
	$v\xleftarrow{e_v}e\xrightarrow{e_w}w$.	
\end{defi}

\begin{eg}
	The barycentric subdivision of the triangle $K^3$ depicted on the left is the category visualized on the right.
	\begin{equation*}
		\begin{tikzcd}[row sep=large]
			& v \arrow[ld, "a", no head, swap] \arrow[rd, "b", no head]
			\\ u \arrow[rr, "c", no head, swap] & & w
		\end{tikzcd}
		\qquad \qquad
		\begin{tikzcd}[row sep=large, column sep=huge]
			a \arrow[d, "a_u", swap] \arrow[rd, "a_v", pos=0.8] & c \arrow[ld, "c_u", pos=0.2, swap] \arrow[rd, "c_w", pos=0.2] & b \arrow[ld, "b_v", pos=0.8, swap] \arrow[d, "b_w"] 
			\\ u & v & w
		\end{tikzcd}
	\end{equation*}
\end{eg}

\begin{defi}\label{defi:structured-decomposition}
	Let $J$ be a graph, and let $\cC$ be a category.
	A \textit{$J$-structured decomposition in~$\cC$} or \textit{with values in~$\cC$} is a functor of the form $d\co \intJ\to\cC$ with the property that for every morphism $k\co x\to y$ in~$\intJ$, its image $d(k)\co d(x)\to d(y)$ under~$d$ is a monomorphism in~$\cC$. The \textit{bags} of~$d$ are the objects~$d(v)$ for $v\in V(J)$.
\end{defi}

\begin{eg}
	Tree-decompositions can be described as tree-shaped structured decompositions with values in the category $\Grph$ of graphs, that is, functors of the form $\int T\to \Grph$ for some tree~$T$. For full details, we refer to \cite[Proposition 3.1.1]{Bumpus-et-al_Structured-Decompositions}, which we will return to in Case study~\ref{subsection:case-study-ordinary-tree-width}. Replacing $T$ by an arbitrary graph leads to the more general notion of a graph-decomposition, studied for example in \cite{Carmesin-et-al_Graph-decompositions} and~\cite{Diestel-et-al_Graph-decompositions}.
	The concept of graph decompositions moreover gives rise to a great variety of combinatorial width parameters measuring the structural resemblance of a graph with respect to a specific graph model, such as a tree, a path or a cycle. Special examples include the classical tree-width as well as diverse variants thereof, as discussed in \cite[Chapter~3]{Bumpus-et-al_Structured-Decompositions}, for instance.
	
	Figure~\ref{fig:example-graphs} shows a structured decomposition with values in~$\Grph$ shaped by the cycle~$C^5$ of length five, together with the cycle itself and its barycentric subdivision~$\int C^5$. The functor $d$ sends those objects $x_i$ of~$\int C^5$ that correspond to the vertices $x_i$ of the graph~$C^5$ to complete graphs.
	\begin{figure}
		\centering
		\begin{overpic}[tics=10, width=0.5\textwidth, trim={0 11.5cm 0 0}]
			{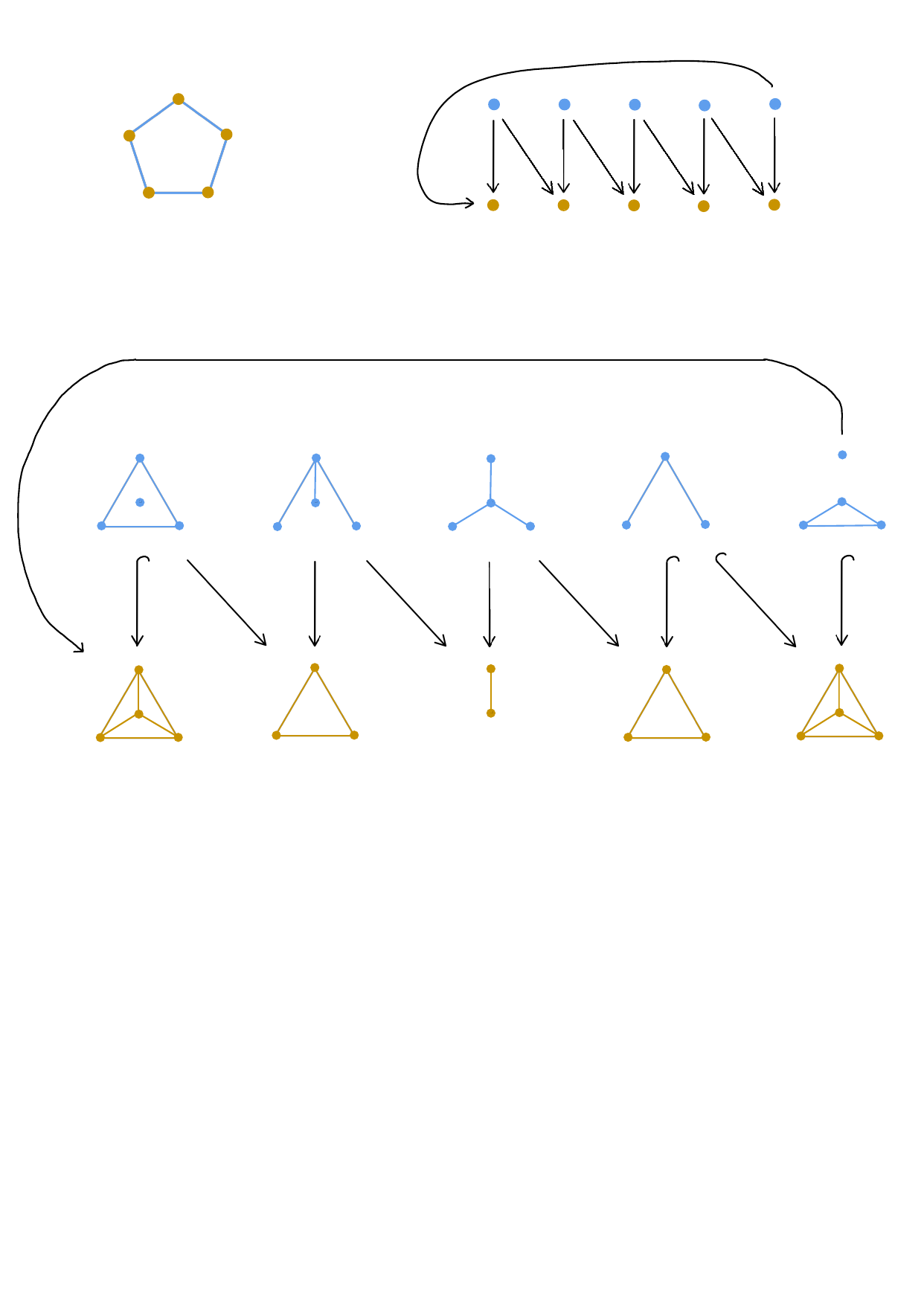}
			\put(3,78){$C^5$}
			\put(8,72){\color{brown}\boldmath{\small$x_1$}}
			\put(11,63){\color{brown}\boldmath{\small$x_2$}}
			\put(24,63){\color{brown}\boldmath{\small$x_3$}}
			\put(27,72){\color{brown}\boldmath{\small$x_4$}}
			\put(18,79){\color{brown}\boldmath{\small$x_5$}}
			\put(39,78){$\int C^5$}
			\put(52,60){\color{brown}\boldmath{\small$x_1$}}
			\put(60,60){\color{brown}\boldmath{\small$x_2$}}
			\put(67.5,60){\color{brown}\boldmath{\small$x_3$}}
			\put(75.5,60){\color{brown}\boldmath{\small$x_4$}}
			\put(83,60){\color{brown}\boldmath{\small$x_5$}}
			\put(-22,50){$d\co \int C^5\to \Grph$}
			\put(9,41){\color{cyan}\boldmath{\small$d(x_1x_2)$}}
			\put(28,41){\color{cyan}\boldmath{\small$d(x_2x_3)$}}
			\put(47,41){\color{cyan}\boldmath{\small$d(x_3x_4)$}}
			\put(66,41){\color{cyan}\boldmath{\small$d(x_4x_5)$}}
			\put(94,40){\color{cyan}\boldmath{\small$d(x_5x_1)$}}
			\put(10,1){\color{brown}\boldmath{\small$d(x_1)$}}
			\put(29,1){\color{brown}\boldmath{\small$d(x_2)$}}
			\put(48,1){\color{brown}\boldmath{\small$d(x_3)$}}
			\put(67.5,1){\color{brown}\boldmath{\small$d(x_4)$}}
			\put(86,1){\color{brown}\boldmath{\small$d(x_5)$}}
		\end{overpic}
		\caption{The cycle~$C^5$, the category~$\int C^5$ and a $C^5$-shaped structured decomposition of graphs.}
		\label{fig:example-graphs}
	\end{figure}
\end{eg}

\subsubsection{Spined sd-categories}

We now come to the definition of a spined structured decomposition category, which plays a central role in these notes.
Spined structured decomposition categories, or briefly, spined sd-categories, were introduced in~\cite{Bumpus-et-al_Structured-Decompositions} and constitute a unified axiomatic setting to study tree-width and generalized width notions.
This section is again based on \cite[Section~2]{Bumpus-et-al_Structured-Decompositions}.

\begin{defi}
	A \textit{structured decomposition category}, or simply, an \textit{sd-category}, is a pair $(\cC,\cG)$ where $\cC$ is a category and $\cG$~is a class of graphs including the trivial graph with exactly one vertex and no edge, subject to the condition
	\begin{enumerate}[leftmargin=0.5in]
	\renewcommand{\labelenumi}{(S\theenumi)}
		\item\label{axiom:S1} Every structured decomposition $d\co\intJ\to\cC$ in~$\cC$ with $J\in\cG$ admits a colimit.
	\end{enumerate} 
	We call $\cG$ the \textit{index class} of the pair $(\cC,\cG)$.
\end{defi}

\begin{defi}\label{defi:spined-sd-category}
	A \textit{spine} on a category~$\cC$ is a family $\Omega = (\Omega_n)_{n\geq0}$ of subcategories $\Omega_n\subseteq\cC$ satisfying the following axioms:
	\begin{enumerate}[leftmargin=0.5in]
	\renewcommand{\labelenumi}{(S\theenumi)}
	\setcounter{enumi}{1}
		\item\label{axiom:S2} For each integer $n\geq0$, $\Omega_n$~is contained in~$\Omega_{n+1}$. In other words, we have an increasing sequence of subcategories
		\[ \Omega_0\subseteq\Omega_1\subseteq\Omega_2\subseteq\dots \]
	
		\item\label{axiom:S3} The union $\bigcup\Omega\coloneqq \bigcup_{n\geq0}\Omega_n$, thus a well-defined subcategory of~$\cC$, is closed under isomorphic objects in~$\cC$, that is, for any objects $W\in\Ob(\bigcup\Omega)$ and $X\in\Ob(\cC)$, if $W$ and~$X$ are isomorphic in~$\cC$, then $X\in\Ob(\bigcup\Omega)$. Furthermore, for any $n\geq0$, $\Omega_n$~is closed under isomorphic objects as a subcategory of~$\bigcup\Omega$.
			
		\item\label{axiom:S4} For every object $X\in\Ob(\cC)$, there exists an object $W\in\Ob(\Omega_n)$ for some integer $n\geq0$ together with a monomorphism $W\rightarrowtail X$ in~$\cC$.
		
		\item\label{axiom:S5} Given $n\geq1$ and objects $W$ of~$\Omega_n$ and $\wt{W}$ of~$\Omega_{n-1}$ along with a monomorphism $W\rightarrowtail \wt{W}$ in~$\cC$, then $W$ is already contained in~$\Omega_{n-1}$.
	\end{enumerate}
	A \textit{spined sd-category} is a triple $(\cC,\cG,\Omega)$ where $(\cC,\cG)$ is an sd-category and $\Omega$ is a spine on~$\cC$.		
\end{defi}

\paragraph{The notion of width in a spined sd-category.}
\label{paragraph:width-in-spined-sd-category}

In an analogous way as tree-decompositions enable a concrete computation of tree-widths, structured decompositions and spined sd-categories provide a sound setting to study generalized width notions. 
In fact, the paper~\cite{Bumpus-et-al_Structured-Decompositions} discusses several concrete examples of different width notions of graph theory, such as ordinary tree-width, complemented tree-width, the tree independence number, hypergraph tree-width and layered tree-width, for instance. The authors prove that they can be recovered by their categorified concept, which shows that spined sd-categories capture the classical notion of tree-width and many variants thereof. This justifies them as a sensible framework to investigate width notions of graph structures in a general, uniform and systematic manner. 
The idea is to imitate the definition of ordinary tree-width from graph theory by replacing tree-decompositions by arbitrary structured decompositions. 
We will later reveal that our temporalized version of spined sd-categories recovers direct temporal generalizations of some example ordinary width notions, see Section~\ref{section:case-studies}.

The rest of the section at hand is based on \cite[Section~2.5]{Bumpus-et-al_Structured-Decompositions}.

\begin{defi}
	For any object~$X$ of a spined sd-category $\Gamma=(\cC,\cG,\Omega)$, we define its \textit{$\Gamma$-size} to be the minimum non-negative integer~$n$ for which there exists an object $W\in\Ob(\Omega_n)$ together with a monomorphism $X\rightarrowtail W$ in~$\cC$. The axiom~(S\ref{axiom:S4}) guarantees that this number is finite, so we obtain a well-defined map
	\[ s_\Gamma\co \Ob(\cC)\to\N_0 \]
	where for any $X\in\Ob(\cC)$, $s_\Gamma(X)$~is the $\Gamma$-size of~$X$.	We call $s_\Gamma$ the \textit{size function} of~$\Gamma$.
\end{defi}

\begin{rmk}
	In \cite[Definition 2.5.4]{Bumpus-et-al_Structured-Decompositions}, the $\Gamma$-size of~$X$ is defined as the minimum integer $n\geq0$ such that $\Omega_n$ is the smallest full subcategory of~$\bigcup\Omega$ with a monomorphism $W\rightarrowtail X$ in~$\cC$ for some $W\in\Ob(\Omega_n)$. We have slightly modified this definition because we will later require that the subcategories $\Omega_n\subseteq\cC$ are full, in which case both definitions agree.
\end{rmk}

This size function now allows to imitate the definition of tree-width of tree-decompositions and of ordinary graphs to a width notion of structured decompositions and of objects in arbitrary spined sd-categories, respectively.

\begin{defi}\label{defi:width-Gamma-width}
	The \textit{width}~$w_\Gamma(d)$ of a structured decomposition $d\co \intJ\to\cC$ with $J\in\cG$ is the maximum $\Gamma$-size of its bags $d(v)$ minus one:
	\[ w_\Gamma(d)\coloneqq \max_{v\in V(J)} s_\Gamma(d(v)) -1. \]
	Now, the \textit{$\Gamma$-width}~$w_\Gamma(X)$ of an object $X$ of~$\cC$ is defined as the minimum $\Gamma$-widths of all structured decompositions $d\co \intJ\to\cC$ with $J\in\cG$ whose colimit is isomorphic to~$X$:
	\[ w_\Gamma(X)\coloneqq \min_{\substack{d\colon\!\! \int\! J\to\cC \textup{ with} \\ J\in\cG,\, \colim d \,\cong\, X}} w_\Gamma(d). \]
\end{defi}

\section{Temporalization of a spined sd-category}
\label{section:temporalization}

Aiming to temporalize spined sd-categories, the guiding idea is to combine the concept of spined sd-categories with the framework of persistent narratives.

\subsection{Combining spined sd-categories and persistent narratives}

Let $\timeT$ be a finite discrete time category, let $\timeS\subseteq \timeT$ be a sub-join-semilattice, and let $\tau\co \timeS\hookrightarrow \timeT$ denote the inclusion functor.
Furthermore, let $(\cC,\cG,\Omega)$ be a spined sd-category which additionally satisfies the following condition:
\begin{enumerate}[leftmargin=0.5in]
	\renewcommand{\labelenumi}{(T\theenumi)}
	\item\label{axiom:T1} The category $\cC$ and its subcategories $\Omega_n\subseteq\cC$ admit all pullbacks, the inclusion functors $\iota_n\co \Omega_n\hookrightarrow\cC$ preserve pullbacks, and each $\Omega_n$ is full in~$\cC$.
\end{enumerate}
In particular, post-composition with~$\iota_n$ provides a well-defined functor
	\[ \Pe(\timeT,\iota_n)\co \Pe(\timeT,\Omega_n)\to \Pe(\timeT,\cC), \]
which we call the \textit{covariant change-of-base functor}, and the same holds for the subcategory $\timeS\subseteq\timeT$ in place of~$\timeT$.
Similarly, we have a \textit{change-of-temporal-resolution functor} defined by pre-composition with the opposite $\tau^\op\co \timeS^\op\hookrightarrow \timeT^\op$ of~$\tau$:
	\[ \Pe(\tau,\cC)\co \Pe(\timeT,\cC)\to \Pe(\timeS,\cC). \]
The same is true when replacing the category~$\cC$ by $\Omega_n$ for $n\geq0$.
Moreover, since the subcategories $\Omega_n\subseteq\cC$ are full, we observe that the inclusion functors $\iota_n\co\Omega_n\hookrightarrow\cC$ reflect pullbacks, meaning that any commutative square in~$\Omega_n$ that is a pullback square in~$\cC$ is also a pullback square in~$\Omega_n$.

\bigskip
Given a non-negative integer~$n$, we consider the following pullback square of categories and functors (adapted from \cite[Definition~2.20]{Bumpus-et-al_Time-Varying}): 

\begin{equation*}
\begin{tikzcd}
	\Pe(\timeT,\cC)\times_{\Pe(\timeS,\cC)}\Pe(\timeS,\Omega_n) \arrow[r] \arrow[d, "\pi_n", swap]
		\arrow[dr, phantom, "\usebox\pullbacksquare", very near start, color=black]
		& \Pe(\timeS,\Omega_n) \arrow[d, "{\Pe(\timeS,\iota_n)}"]
	\\ \Pe(\timeT,\cC) \arrow[r, "{\Pe(\tau,\cC)}", swap]
		& \Pe(\timeS,\cC).
\end{tikzcd}
\end{equation*}
We will regard the pullback category
$\cP_n\coloneqq \Pe(\timeT,\cC)\times_{\Pe(\timeS,\cC)}\Pe(\timeS,\Omega_n)$
as the subcategory of the usual product category $\Pe(\timeT,\cC)\times\Pe(\timeS,\Omega_n)$ whose objects and morphisms are given by the standard pullbacks of (possibly large) sets and set maps. 
Specifically, the objects of~$\cP_n$ are those pairs $(F,G_n)$ of persistent narratives $F\co \timeT^\op\to\cC$ and $G_n\co \timeS^\op\to\Omega_n$ such that the composite functors
	\[ F\tau^\op\co \timeS^\op\hookrightarrow \timeT^\op\xrightarrow{F}\cC \qquad \textup{and} \qquad \iota_n G_n\co \timeS^\op\xrightarrow{G_n}\Omega_n\hookrightarrow\cC \]
coincide, that is, the square below commutes:
\begin{equation*}
\begin{tikzcd}
	\timeS^\op \arrow[r, "G_n"] \arrow[d, "\tau^\op", hook, swap]
		& \Omega_n \arrow[d, "\iota_n", hook]
	\\
	\timeT^\op \arrow[r, "F", swap]
		& \cC.
\end{tikzcd}
\end{equation*}
Similarly, the morphisms of~$\cP_n$ from $(F,G_n)$ to $(F',G_n')$ are those pairs $(\varphi,\psi_n)$ of natural transformations $\varphi\co F\Rightarrow F'\co \timeT^\op\to\cC$ and $\psi_n\co G_n\Rightarrow G_n'\co \timeS^\op\to\Omega_n$ whose component morphisms $\varphi_{[a,b]}\co F([a,b])\to F'([a,b])$ and $(\psi_n)_{[a,b]}\co G_n([a,b])\to G_n'([a,b])$ in~$\cC$ agree for each time interval $[a,b]$ in~$\timeS$.
The functor $\pi_n\co\cP_n\to\Pe(\timeT,\cC)$ is given by canonical projection onto the first component, that is, $\pi_n(F,G_n)\coloneqq F$ and $\pi_n(\varphi,\psi_n)\coloneqq \varphi$. Note that the functor $\Pe(\timeS,\iota_n)$ is monic, and therefore, so is its pullback~$\pi_n$.
In fact, given $(F,G_n)\in\Ob(\cP_n)$, the narrative $G_n\co\timeS^\op\to\Omega_n$ is uniquely determined by $F\co\timeT^\op\to\cC$, and given $(\varphi,\psi_n)\co(F,G_n)\to(F',G_n')$ in~$\cP_n$, the component morphisms of $\psi_n\co G_n\Rightarrow G_n'$ in~$\cC$ are those of $\varphi\co F\Rightarrow F'$ for $[a,b]\in\Ob(\timeS)$.
We define~$\widehat{\Omega}_n$ to be the image category of $\cP_n$ under $\pi_n$:
\[ \widehat{\Omega}_n\coloneqq \pi_n(\cP_n)\subseteq\Pe(\timeT,\cC). \]

This subcategory is concretely given as follows. Its objects are those persistent narratives $F\co\timeT^\op\to\cC$ for which there exists a persistent narrative $G_n\co \timeS^\op\to\Omega_n$ such that the pair $(F,G_n)$ is an object of~$\cP_n$, which means that the restricted functors 
	$F\tau^\op\co \timeS^\op\hookrightarrow \timeT^\op\xrightarrow{F}\cC$ and $\iota_n G_n\co \timeS^\op\xrightarrow{G_n}\Omega_n\hookrightarrow\cC$
are equal. Moreover, a morphism in~$\widehat{\Omega}_n$ from $F=\pi_n(F,G_n)$ to~$F'=\pi_n(F,G'_n)$ is a natural transformation $\varphi\co F\Rightarrow F'\co \timeT^\op\to\cC$ for which there exists a natural transformation $\psi_n\co G_n\Rightarrow G'_n\co \timeS^\op\to\Omega_n$ such that the pair $(\varphi,\psi_n)$ is a morphism of~$\cP_n$, which means that for every $[a,b]\in\Ob(\timeS)$, we have $\varphi_{[a,b]} = (\psi_n)_{[a,b]}$, as morphisms from $F([a,b])=G_n([a,b])$ to $F'([a,b])=G'_n([a,b])$ in~$\cC$.
Since the functor $\pi_n\co \cP_n\to\Pe(\timeT,\cC)$ is monic, the subcategory $\widehat{\Omega}_n = \pi_n(\cP_n)$ of $\Pe(\timeT,\cC)$ is well-defined.

In the lemma below, we record an easier description of~$\widehat{\Omega}_n$, which substantially exploits the assumption that the subcategory $\Omega_n\subseteq\cC$ is full. We will use this result in the proof of our main theorem, Theorem~\ref{thm:temporalization-of-spined-sd-category}, several times.

\begin{lemma}\label{lemma:widehat-Omega-n}
	For every integer $n\geq0$, $\widehat{\Omega}_n$~is the full subcategory of $\Pe(\timeT,\cC)$ consisting of those persistent narratives $F\co\timeT^\op\to\cC$ whose value $F([a,b])$ at any time interval $[a,b]$ in~$\timeS$ is contained in~$\Omega_n$.
\end{lemma}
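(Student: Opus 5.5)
We must show two things: the objects of $\widehat{\Omega}_n$ are exactly the persistent narratives $F\co\timeT^\op\to\cC$ with $F([a,b])\in\Ob(\Omega_n)$ for all $[a,b]\in\Ob(\timeS)$, and every natural transformation between two such narratives lies in $\widehat{\Omega}_n$.

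\emph{Objects.} If $F\in\Ob(\widehat{\Omega}_n)$, then there is some $G_n$ with $F\tau^\op=\iota_nG_n$. Hence $F([a,b])=G_n([a,b])\in\Ob(\Omega_n)$ for every $[a,b]\in\Ob(\timeS)$.

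Conversely, let $F$ satisfy the value condition. We define $G_n\co\timeS^\op\to\Omega_n$ as $F\tau^\op$ with codomain restricted to $\Omega_n$. On objects this is fine by hypothesis. On morphisms, each restriction map $F([c,d])\to F([a,b])$ for an inclusion in $\timeS$ is a morphism of $\cC$ between objects of $\Omega_n$. Since $\Omega_n$ is full by (T\ref{axiom:T1}), it is a morphism of $\Omega_n$. Functoriality is inherited from $F$, so $\iota_nG_n=F\tau^\op$ holds on the nose.

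It remains to check that $G_n$ is a persistent narrative on $\timeS$ with values in $\Omega_n$. Note that $\Omega_n$ has all pullbacks by (T\ref{axiom:T1}), so $\Pe(\timeS,\Omega_n)$ makes sense.
\begin{itemize}
\item First, $F\tau^\op$ is a persistent narrative on $\timeS$ valued in $\cC$, because the change-of-temporal-resolution functor $\Pe(\tau,\cC)$ is well-defined, as stated before the lemma.
\item So each Johnstone square in $\timeS$ is sent by $G_n$ to a square in $\Omega_n$ whose image under $\iota_n$ is a pullback in $\cC$.
\item Since $\iota_n$ reflects pullbacks (full subcategory, as observed in the text), this square is a pullback in $\Omega_n$.
\end{itemize}
Thus $(F,G_n)\in\Ob(\cP_n)$, and therefore $F\in\Ob(\widehat{\Omega}_n)$.

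\emph{Morphisms.} Let $F,F'\in\Ob(\widehat{\Omega}_n)$ with corresponding $G_n,G_n'$. These are unique, since $\Pe(\timeS,\iota_n)$ is monic. Let $\varphi\co F\Rightarrow F'$ be any natural transformation. For $[a,b]\in\Ob(\timeS)$ we set $(\psi_n)_{[a,b]}\coloneqq\varphi_{[a,b]}$. This is a $\cC$-morphism between objects of $\Omega_n$, hence a morphism of $\Omega_n$ by fullness. Naturality of $\psi_n$ follows from naturality of $\varphi$ restricted to $\timeS$, because composition in $\Omega_n$ is that of $\cC$. So $(\varphi,\psi_n)$ is a morphism of $\cP_n$, and $\varphi=\pi_n(\varphi,\psi_n)$ lies in $\widehat{\Omega}_n$.

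Hence $\widehat{\Omega}_n$ is full in $\Pe(\timeT,\cC)$; the inclusion of its morphisms into those of $\Pe(\timeT,\cC)$ holds by construction.

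The only slightly delicate step is the sheaf condition for $G_n$ in $\Omega_n$ rather than in $\cC$. This is exactly where both parts of (T\ref{axiom:T1}) are used: fullness, which gives reflection of pullbacks, and the existence of pullbacks in $\Omega_n$. Everything else is bookkeeping with fullness.
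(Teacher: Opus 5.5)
Your proposal is correct and follows essentially the same route as the paper's proof. You factor $F\tau^\op$ through $\iota_n$ using fullness, obtain the persistent-narrative condition for $G_n$ because the full inclusion $\iota_n$ reflects pullbacks, and get fullness of $\widehat{\Omega}_n$ by restricting the components of $\varphi$ to $\timeS$.
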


\begin{proof}
	It is obvious from the definition that every object of~$\widehat{\Omega}_n$ is a $\cC$-valued persistent narrative on~$\timeT$ sending any time interval in~$\timeS$ to an object of~$\Omega_n$. 
	
	Conversely, let $F\co\timeT^\op\to\cC$ be a persistent narrative with $F([a,b])\in\Ob(\Omega_n)$ for all $[a,b]\in\Ob(\timeS)$. Then for every morphism $i\co[a,b]\hookrightarrow[c,d]$ in~$\timeS$, $F(i)\co F([c,d])\to F([a,b])$ is a morphism in~$\cC$ between objects in~$\Omega_n$. Since the subcategory $\Omega_n\subseteq\cC$ is full, $F(i)$~is already a morphism in~$\Omega_n$.
	Thus, the restricted presheaf $F\tau^\op\co\timeS^\op\to\cC$~factors through the inclusion map $\iota_n\co\Omega_n\hookrightarrow\cC$, that is, there exists a presheaf $G_n\co\timeS^\op\to\Omega_n$ making the square below commute:
	\begin{equation*}
		\begin{tikzcd}
			\timeS^\op \arrow[r, "G_n", dashed] \arrow[d, hook, "\tau^\op", swap] 
			& \Omega_n \arrow[d, hook, "\iota_n"]
			\\
			\timeT^\op \arrow[r, "F", swap]
			& \cC.
		\end{tikzcd}
	\end{equation*}
	Since $F$ is supposed to be a persistent narrative, so is $\iota_nG_n=F\tau^\op\co\timeS^\op\to\cC$.
	Moreover, the full inclusion functor $\iota_n\co\Omega_n\hookrightarrow\cC$ reflects pullbacks, which implies that the presheaf $G_n\co\timeS^\op\to\Omega_n$ is a persistent narrative, as well, so $F$ is an object of~$\widehat{\Omega}_n$.
	
	To show that the subcategory $\widehat{\Omega}_n\subseteq\Pe(\timeT,\cC)$ is full, let $F,F'\co\timeT^\op\to\cC$ be objects of~$\widehat{\Omega}_n$, and let $\varphi$ be any morphism in $\Pe(\timeT,\cC)$ from $F$ to~$F'$, that is, any natural transformation $\varphi\co F\Rightarrow F'\co\timeT^\op\to\cC$.
	Then for every interval $[a,b]$ in~$\timeS$, $\varphi_{[a,b]}\co F([a,b])\to F'([a,b])$ is a morphism in~$\cC$ between objects in the full subcategory $\Omega_n\subseteq\cC$ and is therefore contained in~$\Omega_n$.
	Hence, writing $G_n,G_n'\co \timeS^\op\to\Omega_n$ for the narratives satisfying $F\tau^\op=\iota_nG$ and $F'\tau^\op=\iota_nG_n'$, respectively, then we have a natural transformation $\varphi\tau^\op\co \iota_nG_n\Rightarrow\iota_nG_n'\co\timeS^\op\to\cC$, whose component morphisms lie in~$\Omega_n$, so there exists a natural transformation $\psi_n\co G_n\Rightarrow G_n'\co\timeS^\op\to\Omega_n$ with $(\psi_n)_{[a,b]}=\varphi_{[a,b]}$ for all $[a,b]$ in~$\timeS$.
	Thus, the morphism $\varphi$ lies in~$\widehat{\Omega}_n$.
\end{proof}

\subsection{Temporalized sd-categories}

The next theorem provides a time-respecting notion of an sd-category. We will build upon it in our main theorem, establishing a temporalization of a spined sd-category, see Theorem~\ref{thm:temporalization-of-spined-sd-category}.

\begin{thm}\label{thm:termporalized-sd-category}
	Let again $\timeT$ be a finite discrete time category, let $\tau\co\timeS\hookrightarrow\timeT$ be the inclusion of a sub-join-semilattice  $\timeS\subseteq\timeT$, and let $(\cC,\cG,\Omega)$ be a spined sd-category satisfying~(T\ref{axiom:T1}). Furthermore, suppose that the following conditions hold:
	\begin{enumerate}[leftmargin=0.5in]
	\renewcommand{\labelenumi}{(T\theenumi)}
	\setcounter{enumi}{1}
		\item\label{axiom:T2} The category~$\cC$ is finitely cocomplete, that is, it admits all finite colimits.
		
		\item\label{axiom:T3} The inclusion functor $\Pe(\timeT,\cC)\hookrightarrow[\timeT^\op,\cC]$ admits a left adjoint
			\[ \mathfrak{S}\co[\timeT^\op,\cC]\to\Pe(\timeT,\cC) \]
		such that the composition $\Pe(\timeT,\cC)\hookrightarrow[\timeT^\op,\cC]\xrightarrow{\mathfrak{S}}\Pe(\timeT,\cC)$ is the identity functor.
	\end{enumerate}
	Then the pair $(\Pe(\timeT,\cC),\cG)$ is an sd-category.
\end{thm}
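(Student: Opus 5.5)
We have to verify axiom (S\ref{axiom:S1}) for the pair $(\Pe(\timeT,\cC),\cG)$: every structured decomposition $d\co\intJ\to\Pe(\timeT,\cC)$ with $J\in\cG$ has a colimit in $\Pe(\timeT,\cC)$. The index class $\cG$ already contains the trivial graph, so nothing else needs checking. The plan is to first compute the colimit in the presheaf category $[\timeT^\op,\cC]$ and then push it into narratives with the left adjoint $\mathfrak{S}$ from (T\ref{axiom:T3}), using that left adjoints preserve colimits.

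First, I will check that $[\timeT^\op,\cC]$ has all finite colimits. By (T\ref{axiom:T2}), $\cC$ is finitely cocomplete, and colimits in a functor category are computed pointwise. So for any finite diagram $D\co\cI\to[\timeT^\op,\cC]$, the assignment $[a,b]\mapsto\colim_{i\in\cI}D(i)([a,b])$ defines a presheaf, with functoriality induced by the universal property, and this presheaf is a colimit of $D$. Since $J$ is a finite graph, $\intJ$ is a finite category. Hence the composite
\[ \intJ\xrightarrow{d}\Pe(\timeT,\cC)\hookrightarrow[\timeT^\op,\cC] \]
has a colimit $L$ with cocone $\lambda$. Only finiteness of $\intJ$ matters here; the monomorphism condition on $d$ plays no role.

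Next, I apply $\mathfrak{S}$. As a left adjoint, $\mathfrak{S}$ preserves all colimits that exist, so $\mathfrak{S}(\lambda)$ is a colimit cocone in $\Pe(\timeT,\cC)$ of the diagram $\mathfrak{S}\comp\iota\comp d$, where $\iota$ denotes the inclusion. By the second part of (T\ref{axiom:T3}), $\mathfrak{S}\comp\iota=\id{\Pe(\timeT,\cC)}$, so this diagram is exactly $d$. Therefore $\mathfrak{S}(L)$ is a colimit of $d$ in $\Pe(\timeT,\cC)$, which proves (S\ref{axiom:S1}).

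As a sanity check, the universal property can also be seen directly through the adjunction. A cocone from $d$ to a narrative $N$ is the same as a cocone from $\iota d$ to $\iota N$. This corresponds to a morphism $L\to\iota N$ in $[\timeT^\op,\cC]$, and hence to a morphism $\mathfrak{S}L\to N$.

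The only delicate point is the identification of $\mathfrak{S}\comp\iota\comp d$ with $d$. The equality hypothesis in (T\ref{axiom:T3}) is exactly what makes this strict. If one only had a natural isomorphism, for instance from a fully faithful right adjoint via the counit, the argument would still go through up to isomorphism of diagrams. Beyond this point the proof is routine.
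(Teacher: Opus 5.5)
Your proof is correct and follows essentially the same route as the paper: form the pointwise colimit of $\iota\comp d$ in $[\timeT^\op,\cC]$ using (T2) and finiteness of $\intJ$, then apply $\mathfrak{S}$, which preserves colimits as a left adjoint and satisfies $\mathfrak{S}\comp\iota=\id{\Pe(\timeT,\cC)}$. The only difference is that the paper additionally checks that each $d_t$ is a structured decomposition in $\cC$ and gets the pointwise colimits from (S1) for $(\cC,\cG)$, which, as you observe, is redundant once (T2) is assumed.
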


\begin{rmk}
	By assumption, $\mathfrak{S}$~is both a left adjoint and a retraction of the inclusion functor. We will therefore refer to it as the \textit{sheafification functor}. In fact, in Example~\ref{subsection:case-study-ordinary-tree-width}, where $\cC$ is the category $\Grph$ of graphs, $\mathfrak{S}$~is induced by the usual sheafification functor of presheaves with values in the category of sets.
\end{rmk}

\begin{proof}[Proof of Theorem~\ref{thm:termporalized-sd-category}]
	Let $J\in\cG$ be a graph, and let $d\co \intJ\to\cC$ be a $J$-structured decomposition in $\Pe(\timeT,\cC)$. We have to show that $d$ admits a colimit.
	For any interval $t=[a,b]\in\Ob(\timeT)$, we denote by $\textup{ev}_t\co[\timeT^\op,\cC]\to\cC$ the evaluation functor at~$t$ and consider the composite functor
	\[ \textstyle d_t\co \intJ\xrightarrow{d} \Pe(\timeT,\cC)\hookrightarrow[\timeT^\op,\cC]\xrightarrow{\textup{ev}_t} \cC, \]
	which sends an object $x\in\Ob(\intJ)$ to the object $d_t(x)=d(x)(t)\in\Ob(\cC)$ and a morphism $k\co x\to y$ in $\intJ$ to the morphism $d_t(k)=d(k)_t\co d(x)(t)\to d(y)(t)$ in~$\cC$.
	By definition, $d(k)\co d(x)\to d(y)$ is a monomorphism in $\Pe(\timeT,\cC)$. As a right adjoint, the inclusion functor $\Pe(\timeT,\cC)\hookrightarrow[\timeT^\op,\cC]$ preserves pullbacks and thus monomorphisms, so $d(k)$ is also a monomorphism in $[\timeT^\op,\cC]$.
	Since the category~$\cC$ admits all pullbacks, the evaluation functor $\textup{ev}_t\co[\timeT^\op,\cC]\to\cC$, as well, preserves pullbacks and thus monomorphisms, see \cite[Corollary 2.15.3]{Borceux_Handbook-I}, for instance. Hence, the $t$-component $d_t(k)=d(k)_t$ is a monomorphism in~$\cC$,
		% See also https://math.stackexchange.com/questions/3837806/monomorphisms-in-functor-categories where one can find moreover a counter-example.
	which shows that $d_t\co \intJ\to \cC$ is a $J$-structured decomposition in~$\cC$.
	By the defining property of the sd-category $(\cC,\cG)$, it follows that $d_t$ admits a colimit
	$\sBigl( \colim d_t,\, \lambda^t=(\lambda_x^t\co d(x)(t)\to\colim d_t)_{x\in\intJ} \sBigr)$.
	This provides a presheaf
	\[ \colim\,\!^\textup{pre}d\co \timeT^\op\to\cC \]
	taking any interval $t\in\Ob(\timeT)$ to the object $(\colim^\textup{pre}d)(t)\coloneqq \colim d_t\in\Ob(\cC)$ and any inclusion $i\co t\hookrightarrow t'$ of intervals in~$\timeT$ to the unique morphism $(\colim^\textup{pre}d)(i)\co \colim d_{t'}\to \colim d_t$ in~$\cC$ making the square below commute for any $x\in\Ob(\intJ)$:
	\begin{equation*}
		\begin{tikzcd}
			d(x)(t') \arrow[r, "\lambda_x^{t'}"] \arrow[d, "d(x)(i)", swap]
			& \colim d_{t'} \arrow[d, "(\colim^\textup{pre}d)(i)"]
			\\
			d(x)(t) \arrow[r, "\lambda_x^t", swap]
			& \colim d_t,
		\end{tikzcd}
	\end{equation*}
	which is obtained by applying the universal property of the colimit $(\colim d_{t'}, \lambda^{t'})$ to the cocone $\sBigl( \colim d_t, (\lambda_x^t\comp d(x)(i))_{x\in\intJ} \sBigr)$ under $d_{t'}\co \intJ\to\cC$.
	By construction, for each $x\in\Ob(\intJ)$, the morphisms $\lambda_x^t\co d(x)(t)\to\colim d_t$ with $t\in\Ob(\timeT)$ assemble into a natural transformation
	$\lambda_x\co d(x)\Rightarrow \colim^\textup{pre}d\co \timeT^\op\to\cC$
	with component morphisms $(\lambda_x)_t\coloneqq \lambda_x^t$, so $\lambda_x$ is a morphism in the presheaf category $[\timeT^\op,\cC]$.
	Their family $\lambda\coloneqq(\lambda_x)_{x\in\intJ}$ makes the presheaf $\colim^\textup{pre}$ into a cocone $(\colim^\textup{pre}d,\lambda)$ under the functor $\intJ\xrightarrow{d} \Pe(\timeT,\cC)\hookrightarrow[\timeT^\op,\cC]$.
	The sheafification functor $\mathfrak{S}\co [\timeT^\op,\cC]\to\Pe(\timeT,\cC)$ takes this cocone to a cocone $(\mathfrak{S}(\colim^\textup{pre}d),\mathfrak{S}(\lambda))$ under $\intJ\xrightarrow{d} \Pe(\timeT,\cC)\hookrightarrow[\timeT^\op,\cC]\xrightarrow{\mathfrak{S}} \Pe(\timeT,\cC)$, which is precisely the given functor $d\co\intJ\to\Pe(\timeT,\cC)$, where $\mathfrak{S}(\lambda)$ denotes the family of the morphisms $\mathfrak{S}(\lambda_x)\co d(x)\to\mathfrak{S}(\colim^\textup{pre})$ in $\Pe(\timeT,\cC)$ for $x\in\Ob(\intJ)$.
	
	We show that this cocone is a colimit of~$d$.
	Since the category~$\cC$ admits all finite colimits, the functor category $[\timeT^\op,\cC]$ also admits all finite colimits, and these may be computed pointwise. This follows from the dual result of \cite[Theorem 2.15.2]{Borceux_Handbook-I}.
	By assumption, the graph~$J$ is finite, so its barycentric subdivision~$\intJ$ is a finite category.
	The cocone $(\colim^\textup{pre}d,\lambda)$ under the functor $\intJ\xrightarrow{d}\Pe(\timeT,\cC)\hookrightarrow[\timeT^\op,\cC]$ was constructed pointwise from the colimits $(\colim d_t,\lambda^t)$ of $\intJ\xrightarrow{d}\Pe(\timeT,\cC)\hookrightarrow[\timeT^\op,\cC]\xrightarrow{\textup{ev}_t}\cC$ for $t\in\Ob(\timeT)$, which implies that $(\colim^\textup{pre}d,\lambda)$ is a colimit of $\intJ\xrightarrow{d}\Pe(\timeT,\cC)\hookrightarrow [\timeT^\op,\cC]$. As a left adjoint, the sheafification functor~$\mathfrak{S}$ preserves colimits, so we deduce that the cocone $(\mathfrak{S}(\colim^\textup{pre}d),\mathfrak{S}(\lambda))$under $d\co\intJ\to\cC$ is a colimit of~$d$.
	This shows that $(\Pe(\timeT,\cC),\cG)$ is an sd-category.
\end{proof}

\subsection{Main result}

Here is the main result of our paper.

\begin{thm}\label{thm:temporalization-of-spined-sd-category}
	As before, let $\timeT$ be a finite discrete time category, and let $\tau\co\timeS\hookrightarrow\timeT$ be the inclusion of a sub-join-semilattice $\timeS\subseteq\timeT$.
	Let $(\cC,\cG,\Omega)$ be a spined sd-category that satisfies the axioms (T\ref{axiom:T1}), (T\ref{axiom:T2}) and (T\ref{axiom:T3}) as well as 
	\begin{enumerate}[leftmargin=0.5in]
	\renewcommand{\labelenumi}{(T\theenumi)}
	\setcounter{enumi}{3}
		\item\label{axiom:T4} The category~$\cC$ admits pushout squares along monomorphisms and these are also pullback squares. Moreover, monomorphisms are stable under pushouts.
	\end{enumerate}
	Then the subcategories
		\[ \widehat{\Omega}_n = \pi_n(\cP_n)\subseteq\Pe(\timeT,\cC) \]
	with $n\geq0$ define a spine $\widehat{\Omega}$ on $\Pe(\timeT,\cC)$, thereby exhibiting the sd-category $(\Pe(\timeT,\cC),\cG)$ of Theorem~\ref{thm:termporalized-sd-category} as a spined sd-category $(\Pe(\timeT,\cC),\cG,\widehat{\Omega})$.
\end{thm}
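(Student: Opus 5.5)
The plan is to verify the axioms (S\ref{axiom:S2})–(S\ref{axiom:S5}) of Definition~\ref{defi:spined-sd-category} for the family $\widehat{\Omega}=(\widehat{\Omega}_n)_{n\geq0}$. Throughout I will use Lemma~\ref{lemma:widehat-Omega-n}: $\widehat{\Omega}_n$ is the \emph{full} subcategory of narratives $F$ with $F(t)\in\Ob(\Omega_n)$ for all $t\in\Ob(\timeS)$. That $(\Pe(\timeT,\cC),\cG)$ is an sd-category is Theorem~\ref{thm:termporalized-sd-category}, so only the spine axioms remain. I will also reuse one fact from the proof of that theorem. A monomorphism in $\Pe(\timeT,\cC)$ stays monic in $[\timeT^\op,\cC]$, because the inclusion is a right adjoint. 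It is then monic componentwise, because evaluation preserves pullbacks.

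(S\ref{axiom:S2}) follows at once from the Lemma and $\Omega_n\subseteq\Omega_{n+1}$. For (S\ref{axiom:S3}), the first step is to identify the union $\bigcup\widehat{\Omega}$. Since $\timeT$ is finite, $\timeS$ has finitely many intervals. So a narrative whose $\timeS$-values lie in $\bigcup\Omega$ has all of them in one common $\Omega_N$, with $N$ the maximum of the individual indices. Hence $\bigcup\widehat{\Omega}$ is the full subcategory of narratives whose $\timeS$-values lie in $\bigcup\Omega$.

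Now let $\alpha\co W\Rightarrow X$ be an isomorphism of narratives. Then each component $\alpha_t$ is an isomorphism in $\cC$. Closure of $\bigcup\Omega$ and of each $\Omega_n$ under isomorphic objects, which is (S\ref{axiom:S3}) for $\Omega$, then transfers to $\widehat{\Omega}$ pointwise on $\timeS$.

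For (S\ref{axiom:S5}), take $n\geq1$, objects $W\in\widehat{\Omega}_n$ and $\wt W\in\widehat{\Omega}_{n-1}$, and a monomorphism $W\rightarrowtail\wt W$. Its components $W(t)\rightarrowtail\wt W(t)$ are monic for each $t\in\Ob(\timeS)$. By (S\ref{axiom:S5}) for $\Omega$, each $W(t)$ then lies in $\Omega_{n-1}$, and the Lemma gives $W\in\widehat{\Omega}_{n-1}$.

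The main obstacle is (S\ref{axiom:S4}). Here a narrative $F$ must receive a monomorphism from some narrative in some $\widehat{\Omega}_n$, and pointwise choices $W_t\rightarrowtail F(t)$ need not assemble into a presheaf, let alone a narrative. My first attempt is to use a trivial object, in three steps.

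\begin{enumerate}
\item By (T\ref{axiom:T2}), $\cC$ has an initial object $0$.
\item By (S\ref{axiom:S4}) for $\Omega$ there is a monomorphism $m\co W\rightarrowtail 0$ with $W\in\Omega_k$. Write $u\co 0\to W$ for the unique map. Then $m\comp u=\id{0}$ because $0$ is initial. Also $m\comp(u\comp m)=m=m\comp\id{W}$, and cancelling the mono $m$ gives $u\comp m=\id{W}$. So $W\cong0$, and by (S\ref{axiom:S3}) we get $0\in\Omega_k$.
\item The constant presheaf $\underline{0}$ is a narrative, since $0\times_0 0\cong 0$. By the Lemma, $\underline{0}\in\widehat{\Omega}_k$.
\end{enumerate}

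It then remains to see that $0\to X$ is monic in $\cC$ for every $X$. Pointwise monos then give a mono $\underline{0}\Rightarrow F$ in $[\timeT^\op,\cC]$, and hence in the full subcategory $\Pe(\timeT,\cC)$. This is where (T\ref{axiom:T4}) must enter, and I expect the care to be needed exactly here. Note that (T\ref{axiom:T4}) only speaks about pushouts along maps already known to be monic, so it cannot be applied to $0\to X$ directly. I would therefore apply it to a mono $W_X\rightarrowtail X$ obtained from (S\ref{axiom:S4}) and try to exploit that, by step~2, a mono into $0$ is an isomorphism.

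If this fails, the fallback is to build the subobject by gluing. Choose monos $W_{[p,p]}\rightarrowtail F([p,p])$ on points, extend them along intervals, and glue them with pushouts along monomorphisms. By (T\ref{axiom:T4}) these pushouts are also pullbacks and keep monos monic, so the narrative (pullback) condition and monicity survive the gluing.
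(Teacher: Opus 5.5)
Your (S2), (S3) and (S5) match the paper's. In (S3), say explicitly that fullness of the $\Omega_m$ from (T1) is what makes the componentwise isomorphism $\alpha_t$ an isomorphism in $\bigcup\Omega$, so that closure of $\Omega_n$ inside $\bigcup\Omega$ applies.

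The genuine gap is (S4), and it has two layers.

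\emph{Wrong direction.} You prove the direction literally displayed in the definition, $W\rightarrowtail X$. The paper's (S4), however, is meant, used and proved in the embedding direction. The size function $s_\Gamma(X)$ is defined through a mono $X\rightarrowtail W$ with $W\in\Ob(\Omega_n)$ and is declared finite by (S4). The paper's (S4) step applies (S4) for $\Omega$ as $H([a,a+1])\rightarrowtail G([a,a+1])$ and produces $H\rightarrowtail F$ with $F\in\Ob(\widehat{\Omega}_n)$. For that statement your steps 1--3 give only a mono $0\rightarrowtail W$, which is useless. There is also no trivial witness: the dual of your idea needs every $X\to 1$ to be monic, which forces $\cC$ to be a preorder.

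\emph{Missing step even under your reading.} Your argument stops at exactly the step it needs, and that step is false in general. Monicity of $0\to X$ follows neither from (T4) nor from the other hypotheses. In the adhesive category $A/\Set$ with $\abs{A}\geq 2$, the monos are the injective maps, so the map from the initial object $(A,\id{A})$ to the terminal object is not monic. Taking every $\Omega_n$ to be the whole category gives an instance satisfying all assumptions of the theorem.

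Your fallback does not close the gap. Pushouts along monos build objects that \emph{receive} monos, not subobjects. The sketch also does not say how restriction maps are defined or why components on longer intervals are monic.

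What is missing is the paper's construction of an embedding $H\rightarrowtail F$ on the zigzag $\timeT_{\leq 1}$.
\begin{enumerate}
\item Start from the length-one intervals, not from points. Choose monos $\varphi_{[a,a+1]}\co H([a,a+1])\rightarrowtail G([a,a+1])$ into objects of $\Omega$.
\item Force compatibility at each point $[a+1,a+1]$, the common target of two restriction maps. Push out $\varphi_{[a,a+1]}$ along $H(i)$ and $\varphi_{[a+1,a+2]}$ along $H(j)$. Push out the two resulting monos out of $H([a+1,a+1])$, which are monic by stability under pushouts. Embed the result into an object $G([a+1,a+1])$ of $\Omega$ by (S4) for $\Omega$.
\item This defines $G(i)$, $G(j)$ and a monic $\varphi_{[a+1,a+1]}$ with commuting naturality squares. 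Since pushouts along monos are pullbacks and $G'_{a+1}\rightarrowtail G([a+1,a+1])$ is monic, these squares are even pullbacks.
\item Finiteness of $\timeT$ gives a common index $n$. The narrative condition is not inherited from the gluing; it is imposed by extending $G$ to longer intervals via iterated pullbacks in $\Omega_n$, using (T1). Hence $F=\iota_n G\in\Ob(\widehat{\Omega}_n)$.
\item The components of $\varphi$ on longer intervals are induced by the pullback property of $F$. The pasting law exhibits each as a pullback of a mono, hence monic.
\end{enumerate}
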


Put into words, the theorem allows us to lift measurements of complexity from the static to the time-varying setting.

Note that the condition~(T\ref{axiom:T4}) is satisfied, for example, if $\cC$ is an adhesive category (\cite[Definition~5]{Lack-Sobocinksi_adhesive}). Indeed, the definition of an adhesive category includes the existence of pushouts along monomorphisms; by \cite[Lemma~13]{Lack-Sobocinksi_adhesive}, pushout squares along monomorphisms are also pullback squares; and by \cite[Lemma~12]{Lack-Sobocinksi_adhesive}, pushouts of monomorphisms are again monomorphisms.

We will split the proof of the theorem into several steps, each of them dedicated to the verification of one of the axioms (S\ref{axiom:S2}) to~(S\ref{axiom:S5}).
The axiom~(S\ref{axiom:S4}) turns out to be most demanding. The idea to obtain an object $W\co \timeT^\op\to\cC$ of~$\widehat{\Omega}_n$ as in~(S\ref{axiom:S4}) is to construct it on each interval of length at most one together with connecting monomorphisms and then to extend it to all of the time category~$\timeT$ by taking pullbacks. The first problem will be to establish coherent values for~$W$ on the intervals $[a,b]$ with $b-a\leq1$. This is where the axiom~(S\ref{axiom:T4}) comes into play, which allows us to form pushout squares along monomorphisms. Moreover, in this step, the finiteness of~$\timeT$ will be important so that we may take the maximum of the indices~$n$ of $\Omega_n$ with $W([a,b])\in\Omega_n$ over the chosen intervals $[a,b]\in\Ob(\timeT)$. The second problem then is to properly extend the individual monomorphisms. To do so, we apply the pasting law for pullback squares together with the fact that monomorphisms are stable under pullbacks.

\paragraph{Axiom~(S\ref{axiom:S2}).}
	For every non-negative integer~$n$, we have $\widehat{\Omega}_n\subseteq \widehat{\Omega}_{n+1}$ as subcategories of $\Pe(\timeT,\cC)$.
	
	\begin{proof}
		Since $\Omega = (\Omega_n)_{n\geq0}$ is a spine on~$\cC$, $\Omega_n$ is a subcategory of~$\Omega_{n+1}$, so the statement is an immediate consequence of Lemma~\ref{lemma:widehat-Omega-n}.
	\end{proof}	

We thus obtain a well-defined subcategory $\bigcup\widehat{\Omega}\coloneqq \bigcup_{n\geq0}\widehat{\Omega}_n$ of $\Pe(\timeT,\cC)$.

\paragraph{Axiom~(S\ref{axiom:S3}).}
	The subcategories $\bigcup\widehat{\Omega}\subseteq\Pe(\timeT,\cC)$ and $\Omega_n\subseteq\bigcup\widehat{\Omega}$ are closed under isomorphic objects.
	
	\begin{proof}
		We will show the stronger result that $\widehat{\Omega}_n$ is closed under isomorphic objects in $\Pe(\timeT,\cC)$.
		(Beware that this is not in general true for the spined sd-category $(\cC,\cG,\Omega)$, that is, $\Omega_n$ is not necessarily closed under isomorphic objects in~$\cC$, unless $\bigcup\Omega$ is full in~$\cC$.)
		Given objects $F$ of~$\widehat{\Omega}_n$ and $H$~of $\Pe(\timeT,\cC)$ together with an isomorphism $\varphi\co F\xRightarrow{\sim} H$ in $\Pe(\timeT,\cC)$, we need to show that $H$ is already contained in~$\widehat{\Omega}_n$.
		For each time interval $[a,b]$ in~$\timeS$, $\varphi_{[a,b]}\co F([a,b])\to H([a,b])$ is an isomorphism in~$\cC$ between the object $F([a,b])$ of $\Omega_n\subseteq \bigcup\Omega\coloneqq \bigcup_{m\geq0}\Omega_m$ and $H([a,b])$ of~$\cC$. Since $\Omega$ is a spine on~$\cC$, $\bigcup\Omega$~is closed under isomorphic objects as a subcategory of~$\cC$, so $H([a,b])$ lies in~$\Omega_m$ for some $m\geq0$. Thus, $\varphi_{[a,b]}$~is an isomorphism in~$\cC$ joining objects in $\Omega_n$ and~$\Omega_m$. Since these subcategories of~$\cC$ are full, $\varphi_{[a,b]}$~is also an isomorphism in~$\bigcup\Omega$. The assumption that $\Omega$ is a spine on~$\cC$ also guarantees that $\Omega_n$ is closed under isomorphic objects as a subcategory of $\bigcup\Omega$, so in view of the isomorphism $\varphi_{[a,b]}\co F([a,b])\xrightarrow{\sim} H([a,b])$ in~$\bigcup\Omega$ with $F([a,b])\in\Ob(\Omega_n)$ and $H([a,b])\in\Ob(\Omega_m)\subseteq\Ob(\bigcup\Omega)$, it follows that $H([a,b])$ is already contained in~$\Omega_n$.
		By virtue of the description of~$\widehat{\Omega}_n$ exposed in Lemma~\ref{lemma:widehat-Omega-n}, we may deduce that $H$ is an object of~$\widehat{\Omega}_n$. Hence, $\widehat{\Omega}_n$~is closed under isomorphic objects in $\Pe(\timeT,\cC)$.
	\end{proof}

\paragraph{Axiom~(S\ref{axiom:S4}).}
	Every persistent narrative $H\co \timeT^\op\to\cC$ admits a monomorphism $\varphi\co H\rightarrowtail F$ in $\Pe(\timeT,\cC)$ to a persistent narrative $F\co \timeT^\op\to\cC$ that is contained in $\widehat{\Omega}_n$ for some $n\geq0$.
	
	\begin{proof}
		As a spine on~$\cC$, $\Omega$~satisfies the analogous axiom~(S\ref{axiom:S4}), so given any interval $[a,a+1]\in\Ob(\timeT)$ of length one, there exists an object $G([a,a+1])\in\Omega_{n([a,a+1])}$ for some $n([a,a+1])\geq0$ together with a monomorphism $\varphi_{[a,a+1]}\co H([a,a+1])\rightarrowtail G([a,a+1])$ in~$\cC$.
		Moreover, given any span in~$\timeT$ of the form
		$[a,a+1]\xhookleftarrow{i}[a+1,a+1]\xhookrightarrow{j}[a+1,a+2]$,
		we have the following diagram in~$\cC$:
		\begin{equation*}
		\begin{tikzcd}
			H([a,a+1]) \arrow[r, "H(i)"] \arrow[d, "\varphi_{[a,a+1]}", rightarrowtail, swap]
				& H([a+1,a+1])
				& H([a+1,a+2]) \arrow[l, "H(j)", swap] \arrow[d, "\varphi_{[a+1,a+2]}", rightarrowtail]
			\\
			G([a,a+1]) & & G([a+1,a+2]).
		\end{tikzcd}
		\end{equation*}
		By the axiom~(T\ref{axiom:T4}), since $\varphi_{[a,a+1]}$ and $\varphi_{[a+1,a+2]}$ are monomorphisms, we may form pushout squares as shown below:
		\begin{equation*}
		\begin{tikzcd}
			H([a,a+1]) \arrow[rr, "H(i)"] \arrow[d, "\varphi_{[a,a+1]}", rightarrowtail, swap]
				& & H([a+1,a+1]) \arrow[ld] \arrow[rd]
				& & H([a+1,a+2]) \arrow[ll, "H(j)", swap] \arrow[d, "\varphi_{[a+1,a+2]}", rightarrowtail]
			\\
			G([a,a+1]) \arrow[r] & G_{a+1}' & & G_{a+1}'' & G([a+1,a+2]). \arrow[l]
			\arrow["\mathlarger{\lrcorner}"{anchor=center, pos=0.125, rotate=180}, draw=none, from=2-2, to=1-1]
			\arrow["\mathlarger{\lrcorner}"{anchor=center, pos=0.125, rotate=90}, draw=none, from=2-4, to=1-5]
		\end{tikzcd}
		\end{equation*}		
		Again by~(T\ref{axiom:T4}), the morphisms from $H([a+1,a+1])$ to $G_{a+1}'$ and~$G_{a+1}''$, respectively, are monic, so (T\ref{axiom:T4}) allows us to form a further pushout square in~$\cC$, which by~(T\ref{axiom:T4}) is also a pullback square and consists entirely of monomorphisms:
		\begin{equation*}
		\begin{tikzcd}
				& H([a+1,a+1]) \arrow[ld, rightarrowtail, shorten <=0.2cm] \arrow[rd, rightarrowtail, shorten <=0.2cm]
			\\ 
			G_{a+1}' \arrow[rd, rightarrowtail] & & G_{a+1}'' \arrow[ld, rightarrowtail]
			\\
				& G_{a+1}'''.
			\arrow["\mathlarger{\lrcorner}"{anchor=center, pos=0.125, rotate=315}, draw=none, from=1-2, to=3-2]
			\arrow["\mathlarger{\lrcorner}"{anchor=center, pos=0.125, rotate=135}, draw=none, from=3-2, to=1-2]
		\end{tikzcd}
		\end{equation*}
		In virtue of the axiom~(S\ref{axiom:S4}) for the spine $\Omega$ on~$\cC$, we can choose a monomorphism in~$\cC$ from $G_{a+1}'''$ to an object $G([a+1,a+1])$ of $\Omega_{n([a+1,a+1])}$ for some $n([a+1,a+1])\geq0$.
		Define $\varphi_{[a+1,a+1]}$ to be the composite monomorphism
		\begin{equation*}
			\varphi_{[a+1,a+1]}\co H([a+1,a+1])\rightarrowtail G_{a+1}' \rightarrowtail G_{a+1}''' \rightarrowtail G([a+1,a+1]),
		\end{equation*}
		and define $G(i)$ and $G(j)$ to be the composite morphisms making the following diagram in~$\cC$ commute:
		\begin{equation*}
		\begin{tikzcd}[row sep=large]
			H([a,a+1]) \arrow[rr, "H(i)"] \arrow[d, "\varphi_{[a,a+1]}", rightarrowtail, swap]
				& & H([a+1,a+1]) \arrow[ld, rightarrowtail, shorten <=0.2cm] \arrow[rd, rightarrowtail, shorten <=0.2cm]
				& & H([a+1,a+2]) \arrow[ll, "H(j)", swap] \arrow[d, "\varphi_{[a+1,a+2]}", rightarrowtail]
			\\
			G([a,a+1]) \arrow[r] \arrow[rrd, "G(i)", shorten >=0.2cm, swap]
				& G_{a+1}' \arrow[r, rightarrowtail]
				& G_{a+1}''' \arrow[d, rightarrowtail]
				& G_{a+1}'' \arrow[l, rightarrowtail] 
				& G([a+1,a+2]) \arrow[l] \arrow[lld, "G(j)", shorten >=0.2cm]
			\\
				& & G([a+1,a+1]).
			\arrow["\mathlarger{\lrcorner}"{anchor=center, pos=0.125, rotate=0}, draw=none, from=1-1, to=2-2]
			\arrow["\mathlarger{\lrcorner}"{anchor=center, pos=0.125, rotate=180}, draw=none, from=2-2, to=1-1]
			\arrow["\mathlarger{\lrcorner}"{anchor=center, pos=0.125, rotate=270}, draw=none, from=1-5, to=2-4]
			\arrow["\mathlarger{\lrcorner}"{anchor=center, pos=0.125, rotate=90}, draw=none, from=2-4, to=1-5]
			\arrow["\mathlarger{\lrcorner}"{anchor=center, pos=0.125, rotate=315}, draw=none, from=1-3, to=2-3]
			\arrow["\mathlarger{\lrcorner}"{anchor=center, pos=0.125, rotate=135}, draw=none, from=2-3, to=1-3]
		\end{tikzcd}
		\end{equation*}
		
		This yields the following commutative diagram in~$\cC$:
		\begin{equation}\label{eqn:resulting-diagram-of-two-squares}
			\begin{tikzcd}
				H([a,a+1]) \arrow[r, "H(i)"] \arrow[d, "\varphi_{[a,a+1]}", rightarrowtail, swap]
				& H([a+1,a+1]) \arrow[d, "\varphi_{[a+1,a+1]}", rightarrowtail]
				& H([a+1,a+2]) \arrow[l, "H(j)", swap] \arrow[d, "\varphi_{[a+1,a+2]}", rightarrowtail]
				\\
				G([a,a+1]) \arrow[r] 
				& G([a+1,a+1])
				& G([a+1,a+2]) \arrow[l]
			\end{tikzcd}
		\end{equation}
		
		We observe that both squares are pullback squares. For the left-hand square, this follows from the fact that 
		\begin{equation*}
			\begin{tikzcd}
				H([a,a+1]) \arrow[r, "H(i)"] \arrow[d, "\varphi_{[a,a+1]}", rightarrowtail, swap]
				& H([a+1,a+1]) \arrow[d, "\varphi_{[a+1,a+1]}", rightarrowtail]
				\\
				G([a,a+1]) \arrow[r, "G(i)", swap] 
				& G([a+1,a+1])
			\end{tikzcd}
		\end{equation*}
		is a pullback square and that the composite morphism
		\[ G_{a+1}'\rightarrowtail G_{a+1}'''\rightarrowtail G([a+1,a+1]) \]
		is monic. Indeed, to verify the universal property of a pullback square, suppose we are given any object $A\in\Ob(\cC)$ and any two morphisms $f\co A\to G([a,a+1])$ and $g\co A\to H([a+1,a+1])$ with $G(i)f=\varphi_{[a+1,a+1]}g$.
		By definition of the morphisms $G(i)$ and $(\varphi_{[a+1,a+1]})$, this means that the diagram below commutes:
		\begin{equation*}
		\begin{tikzcd}
			A \arrow[r, "g"] \arrow[d, "f", swap]
				& H([a+1,a+1]) \arrow[r, rightarrowtail]
				& G_{a+1}' \arrow[d, rightarrowtail]
			\\
			G([a,a+1]) \arrow[d]
				& & G_{a+1}''' \arrow[d, rightarrowtail]
			\\
			G_{a+1}' \arrow[r, rightarrowtail]
				& G_{a+1}''' \arrow[r, rightarrowtail]
				& G([a+1,a+1])
		\end{tikzcd}
		\end{equation*}
		where the right-hand arrows and the bottom arrows are composed of the same monomorphisms, which implies commutativity of the square
		\begin{equation*}
		\begin{tikzcd}
			A \arrow[r, "g"] \arrow[d, "f", swap]
			& H([a+1,a+1]) \arrow[d, rightarrowtail]
			\\
			G([a,a+1]) \arrow[r]
			& G_{a+1}'.
		\end{tikzcd}
		\end{equation*}
		Recalling that
		\begin{equation*}
			\begin{tikzcd}
				H([a,a+1]) \arrow[r, "H(i)"] \arrow[d, "\varphi_{[a,a+1]}", swap]
				& H([a+1,a+1]) \arrow[d, rightarrowtail]
				\\
				G([a,a+1]) \arrow[r]
				& G_{a+1}'
			\end{tikzcd}
		\end{equation*}
		is a pullback square, we deduce that there exists a unique morphism $u\co A\to H([a,a+1])$ in~$\cC$ satisfying $\varphi_{[a,a+1]}u=f$ and $H(i)u=g$. Hence, the left-hand square of the diagram~(\ref{eqn:resulting-diagram-of-two-squares}) is indeed a pullback square, and the analogous argument applies to the square on the right-hand side.
		
		Let $\timeT_{\leq1}\subseteq \timeT$ denote the full subcategory containing as objects those intervals $[a,b]\in\Ob(\timeT)$ whose length $b-a$ is at most one.
		The assumption that $\timeT$ is finite guarantees that $\timeT_{\leq1}$ is a zigzag diagram, meaning that it has the following form for some non-negative integer~$N$, where we assume without loss of generality that $[0,0]\in\Ob(\timeT)$:
		\begin{equation*}
		\begin{tikzcd}[row sep=small]
			& {[0,1]} & & {[1,2]} & \dots & {[N-1,N]}
			\\ 
			{[0,0]} \arrow[ru, hook] & & {[1,1]} \arrow[lu, hook'] \arrow[ru, hook] & & \dots & & {[N,N]}. \arrow[lu, hook'] 
		\end{tikzcd}
		\end{equation*}
		We inductively apply the above procedure to obtain a diagram of pullback squares in~$\cC$ as depicted below, where in the first and last square, we can simply take $G([0,0])\coloneqq G_0''$ and $G([N,N])\coloneqq G_N'$, respectively:
		\begin{equation}\label{eqn:constr-mono-H-G}
	\begin{adjustbox}{max width=\displaywidth}			
		\begin{tikzcd}
				& H([0,1]) \arrow[ld] \arrow[dd, "{\varphi_{[0,1]}}"{description, pos=0.6}, rightarrowtail] 
				&[-1.2cm] &[-1.2cm] H([a,a+1]) \arrow[rd, shorten >=0.2cm] \arrow[dd, "{\varphi_{[a,a+1]}}"{description, pos=0.6}, rightarrowtail]
				& & H([a+1,a+2]) \arrow[ld, shorten >=0.2cm] \arrow[dd, "{\varphi_{[1+2,1+2]}}"{description, pos=0.6}, rightarrowtail] 
				&[-1.2cm] &[-1.2cm] H([N-1,N]) \arrow[rd] \arrow[dd, "{\varphi_{[N-1,N]}}"{description, pos=0.6}, rightarrowtail]
			\\
			H([0,0]) \arrow[dd, "{\varphi_{[0,0]}}"{description, pos=0.4}, rightarrowtail] 
				& & \quad \dots 
				& & H([a+1,a+1]) \arrow[dd, "{\varphi_{[a+1,a+1]}}"{description, pos=0.4}, rightarrowtail]  
				& & \quad \dots 
				& & H([N,N]) \arrow[dd, "{\varphi_{[N,N]}}"{description, pos=0.4}, rightarrowtail] 
			\\
				& G([0,1]) \arrow[ld] & & G([a,a+1]) \arrow[rd, shorten >=0.2cm] & & G([a+1,a+2]) \arrow[ld, shorten >=0.2cm] & & G([N-1,N]) \arrow[rd]
			\\
			G([0,0]) & & & & G([a+1,a+1]) & & & & G([N,N]).
			\arrow["\mathlarger{\lrcorner}"{anchor=center, pos=0.125, rotate=270}, draw=none, from=1-2, to=4-1]
			\arrow["\mathlarger{\lrcorner}"{anchor=center, pos=0.125, rotate=0}, draw=none, from=1-4, to=4-5]
			\arrow["\mathlarger{\lrcorner}"{anchor=center, pos=0.125, rotate=270}, draw=none, from=1-6, to=4-5]
			\arrow["\mathlarger{\lrcorner}"{anchor=center, pos=0.125, rotate=0}, draw=none, from=1-8, to=4-9]
		\end{tikzcd}
	\end{adjustbox}
		\end{equation}
		
		By construction, for each interval $[a,b]\in\Ob(\timeT_{\leq1})$ of length at most one, we have $G([a,b])\in\Omega_{n([a,b])}$ for some $n([a,b])\geq0$.
		Finiteness of~$\timeT$ allows us to take the maximum $n\geq0$ of these integers $n([a,b])$ over all $[a,b]\in\Ob(\timeT_{\geq1})$. Then for each such interval $[a,b]$, we have $G([a,b])\in\Omega_{n([a,b])}\subseteq\Omega_n$.
		Since the category~$\Omega_n$ admits all pullbacks, we may extend the lower row of the diagram~(\ref{eqn:constr-mono-H-G}) to a persistent narrative $G\co \timeT^\op\to\Omega_n$. (See also \cite[Proposition~2.9]{Bumpus-et-al_Time-Varying}.)
		Post-composition with the inclusion functor $\iota_n\co\Omega_n\hookrightarrow\cC$ provides a persistent narrative
		\[ F\coloneqq \Pe(\timeT,\iota_n)(G) = \iota_nG\co \timeT^\op\xrightarrow{G}\Omega_n\hookrightarrow\cC. \]
		It sends each interval $[a,b]$ in~$\timeS$ to an object in~$\Omega_n$, so $F$ is contained in $\widehat{\Omega}_n$.
		
		We want to extend the family of the above-constructed monomorphisms $\varphi_{[a,b]}\co H([a,b])\rightarrowtail G([a,b])=F([a,b])$ in~$\cC$ indexed by $[a,b]\in\Ob(\timeT_{\geq1})$ to a natural transformation $\varphi\co H\Rightarrow F\co \timeT^\op\to\cC$. For this, we will construct the missing monomorphisms $\varphi_{[a,b]}$ for the intervals $[a,b]\in\Ob(\timeT)$ of length $b-a>1$ inductively.
		Given any integer $a\in\{0,\dots,N-1\}$, since $F\co \timeT^\op\to\cC$ is a persistent narrative, the following is a pullback square in~$\cC$:
		\begin{equation}\label{eqn:pullback-square-F}
		\begin{tikzcd}
				& F([a,a+2]) \arrow[ld] \arrow[rd, shorten >=0.2cm]
			\\ 
			F([a,a+1]) \arrow[rd, shorten >=0.2cm] & & F([a+1,a+2]) \arrow[ld, shorten >=0.2cm]
			\\
				& F([a+1,a+1]).
			\arrow["\mathlarger{\lrcorner}"{anchor=center, pos=0.125, rotate=315}, draw=none, from=1-2, to=3-2]
		\end{tikzcd}
		\end{equation}
		By construction, it fits into the diagram below, whose top square commutes by functoriality of $H\co \timeT^\op\to\cC$ and whose two front squares commute by construction:
		\begin{equation}\label{eqn:towards-comm-cube-H-F}
			\begin{tikzcd}[row sep=large]
				& H([a,a+2]) \arrow[ld] \arrow[rd, shorten >=0.2cm]
				\\ 
				H([a,a+1]) \arrow[rd, shorten >=0.2cm] \arrow[d, "\varphi_{[a,a+1]}", rightarrowtail, swap]
					& F([a,a+2]) \arrow[ld] \arrow[rd, shorten >=0.2cm] 
					& H([a+1,a+2]) \arrow[ld, shorten >=0.2cm] \arrow[d, "\varphi_{[a+1,a+2]}", rightarrowtail]
				\\
				F([a,a+1]) \arrow[rd, shorten >=0.2cm] 
					& H([a+1,a+1]) \arrow[d, "\varphi_{[a+1,a+1]}", rightarrowtail]
					& F([a+1,a+2]) \arrow[ld, shorten >=0.2cm]
				\\
					& F([a+1,a+1]).
				\arrow["\mathlarger{\lrcorner}"{anchor=center, pos=0.125, rotate=315}, draw=none, from=2-2, to=4-2]
			\end{tikzcd}
		\end{equation}
		In particular, the ``outer'' hexagon
		\begin{equation*}
		\begin{tikzcd}
			& H([a,a+2]) \arrow[ld] \arrow[rd, shorten >=0.2cm]
			\\ 
			H([a,a+1]) \arrow[d, "\varphi_{[a,a+1]}", rightarrowtail, swap]
			& & H([a+1,a+2]) \arrow[d, "\varphi_{[a+1,a+2]}", rightarrowtail]
			\\
			F([a,a+1]) \arrow[rd, shorten >=0.2cm] 
			& & F([a+1,a+2]) \arrow[ld, shorten >=0.2cm]
			\\
			& F([a+1,a+1])
		\end{tikzcd}
		\end{equation*}
		is commutative.
		Thus, by the universal property of the pullback square~(\ref{eqn:pullback-square-F}), there exists a unique morphism
		\[ \varphi_{[a,a+2]}\co H([a,a+2])\to F([a,a+2]) \]
		in~$\cC$ making the following diagram commute:
		\begin{equation*}
		\begin{tikzcd}
			& H([a,a+2]) \arrow[ld] \arrow[rd, shorten >=0.2cm] \arrow[d, "\varphi_{[a,a+2]}"]
			\\ 
			H([a,a+1]) \arrow[d, "\varphi_{[a,a+1]}", rightarrowtail, swap]
			& F([a,a+2]) \arrow[ld] \arrow[rd, shorten >=0.2cm] 
			& H([a+1,a+2]) \arrow[d, "\varphi_{[a+1,a+2]}", rightarrowtail]
			\\
			F([a,a+1]) 
			& & F([a+1,a+2]).
		\end{tikzcd}
		\end{equation*}
		Hence, the diagram~(\ref{eqn:towards-comm-cube-H-F}) extends to a commutative cube in~$\cC$:
		\begin{equation}\label{eqn:comm-cube-H-F}
			\begin{tikzcd}[row sep=large]
					& H([a,a+2]) \arrow[ld] \arrow[rd, shorten >=0.2cm] \arrow[d, "\varphi_{[a,a+2]}"]
				\\ 
				H([a,a+1]) \arrow[rd, shorten >=0.2cm] \arrow[d, "\varphi_{[a,a+1]}", rightarrowtail, swap]
					& F([a,a+2]) \arrow[ld] \arrow[rd, shorten >=0.2cm] 
					& H([a+1,a+2]) \arrow[ld, shorten >=0.2cm] \arrow[d, "\varphi_{[a+1,a+2]}", rightarrowtail]
				\\
				F([a,a+1]) \arrow[rd, shorten >=0.2cm] 
					& H([a+1,a+1]) \arrow[d, "\varphi_{[a+1,a+1]}", rightarrowtail]
					& F([a+1,a+2]) \arrow[ld, shorten >=0.2cm]
				\\
					& F([a+1,a+1]).
				\arrow["\mathlarger{\lrcorner}"{anchor=center, pos=0.125, rotate=315}, draw=none, from=2-2, to=4-2]
			\end{tikzcd}
		\end{equation}
		Let us show that $\varphi_{[a,a+2]}$ is a monomorphism. For this purpose, we consider the following diagrams in~$\cC$:
		\begin{equation*}
		\begin{tikzcd}
			H([a,a+2]) \arrow[r] \arrow[d] \arrow[dr, phantom, "\textup{(I)}"]
				& H([a+1,a+2]) \arrow[d]
			\\
			H([a,a+1]) \arrow[r] \arrow[d, "\varphi_{[a,a+1]}", rightarrowtail, swap] \arrow[dr, phantom, "\textup{(II)}"]
				& H([a+1,a+1]) \arrow[d, "\varphi_{[a+1,a+1]}", rightarrowtail]
			\\
			F([a,a+1]) \arrow[r]
				& F([a+1,a+1])	
		\end{tikzcd}
		\qquad \qquad
		\begin{tikzcd}
			H([a,a+2]) \arrow[r] \arrow[d, "\varphi_{[a,a+2]}", swap] \arrow[dr, phantom, "\textup{(III)}"]
			& H([a+1,a+2]) \arrow[d, "\varphi_{[a+1,a+2]}", rightarrowtail]
			\\
			F([a,a+2]) \arrow[r] \arrow[d] \arrow[dr, phantom, "\textup{(IV)}"]
			& F([a+1,a+2]) \arrow[d]
			\\
			F([a,a+1]) \arrow[r]
			& F([a+1,a+1])
		\end{tikzcd}
		\end{equation*}
		where the square~(I) is the top square of the diagram~(\ref{eqn:comm-cube-H-F}), (II)~is the left-hand front square, (III)~is the right-hand back square, and (IV) is the bottom square of~(\ref{eqn:comm-cube-H-F}).
		Since $H\co \timeT^\op\to\cC$ is a persistent narrative, (I)~is a pullback square, and by construction, (II)~is a pullback square, as well. By the pasting law for pullback squares, this implies that so is their composite square, meaning the outer diagram of the diagram~$\frac{\textup{(I)}}{\textup{(II)}}$.
		By commutativity of the left-hand back square and the right-hand front square of the cube~(\ref{eqn:comm-cube-H-F}), this outer diagram coincides with the outer diagram of the diagram~$\frac{\textup{(III)}}{\textup{(IV)}}$, that is, the composite square of (III) and~(IV). Since (IV) is a pullback square, it follows, again by the pasting law for pullback squares, that also (III) is a pullback square.
		Thus, the morphism $\varphi_{[a,a+2]}\co H([a,a+2])\to F([a,a+2])$ is a pullback of the monomorphism $\varphi_{[a+1,a+2]}\co H([a+1,a+2])\rightarrowtail F([a+1,a+2])$, which shows that $\varphi_{[a,a+2]}$, as well, is a monomorphism.
		
		Inductively, proceeding in this manner, since $\timeT$ is finite, we obtain a family $\varphi\coloneqq (\varphi_{[a,b]})_{[a,b]\in\Ob(\timeT)}$ of monomorphisms $\varphi_{[a,b]}\co H([a,b])\rightarrowtail F([a,b])$ in~$\cC$ for each interval $[a,b]\in\Ob(\timeT)$ such that for every morphism $i\co [a,b]\hookrightarrow[c,d]$ in~$\timeT$, the square
		\begin{equation*}
		\begin{tikzcd}
			H([c,d]) \arrow[r, "H(i)"] \arrow[d, "\varphi_{[c,d]}", rightarrowtail, swap]
				& H([a,b]) \arrow[d, "\varphi_{[a,b]}", rightarrowtail]
			\\
			F([c,d]) \arrow[r, "F(i)", swap]
				& F([a,b])
		\end{tikzcd}
		\end{equation*}
		commutes. 
		Hence, $\varphi$ is a natural transformation $\varphi\co H\Rightarrow F\co \timeT^\op\to\cC$.
		This means that $\varphi$~is a morphism in the functor category $[\timeT^\op,\cC]$, and since $H$ and $F$ are persistent narratives, $\varphi$~is also a morphism in the full subcategory $\Pe(\timeT,\cC)\subseteq [\timeT^\op,\cC]$.
		Since each of its component morphisms is a monomorphism in~$\cC$, $\varphi$~is a monomorphism in $[\timeT^\op,\cC]$, so in particular, it is monic in~$\Pe(\timeT,\cC)$.
		
		We conclude that for any persistent narrative $H\co \timeT^\op\to\cC$, there exists a persistent narrative $F\co \timeT^\op\to\cC$ contained in~$\widehat{\Omega}_n$ for some integer $n\geq0$ together with a monomorphism $\varphi\co H\Rightarrow F$ in~$\Pe(\timeT,\cC)$, which completes the proof.
	\end{proof}

\paragraph{Axiom~(S\ref{axiom:S5}).}
	For every positive integer $n\geq1$, given persistent narratives $F\in\Ob(\widehat{\Omega}_n)$ and $\wt{F}\in\Ob(\widehat{\Omega}_{n-1})$ together with a monomorphism $\varphi\co F\rightarrowtail\wt{F}$ in $\Pe(\timeT,\cC)$, then $F$ is already contained in~$\widehat{\Omega}_{n-1}$.

	\begin{proof}
		The monomorphism $\varphi\co F\rightarrowtail\wt{F}$ in $\Pe(\timeT,\cC)$ is objectwise a monomorphism in~$\cC$.
		For each interval $[a,b]$ in~$\timeS$, we thus have a monomorphism $\varphi_{[a,b]}\co F([a,b])\rightarrowtail\wt{F}([a,b])$ in~$\cC$ between objects $F([a,b])$ and $\wt{F}([a,b])$ in~$\Omega_{n-1}$.
		By the axiom~(S\ref{axiom:S5}) for the spine~$\Omega$ on~$\cC$, it follows that $F([a,b])$ lies in~$\Omega_{n-1}$.
		Now, Lemma~\ref{lemma:widehat-Omega-n} implies that the persistent narrative $F\co \timeT^\op\to\cC$ is an object of~$\widehat{\Omega}_{n-1}$.
	\end{proof}

This completes the proof of Theorem~\ref{thm:temporalization-of-spined-sd-category}.

\begin{rmk}
	Hitherto, we have only considered finite simple undirected graphs. To accommodate more general settings, we might also allow for the indexing graph of a structured decomposition to be another kind of a graph, for example, allowing loops, multiple edges or directed edges. 
		% See also \cite[Footnote~5]{Bumpus-et-al_Structured-Decompositions}.
	Its barycentric subdivision is defined in the analogous way, and the definition of a structured decomposition remains the same.
	Moreover, we may also allow infinite graphs, meaning that the sets of vertices and edges may be infinite. In this case, however, to guarantee that Theorems \ref{thm:termporalized-sd-category} and~\ref{thm:temporalization-of-spined-sd-category} remain valid, we would need to strengthen the axiom~(T\ref{axiom:T2}), now requiring the existence of all small colimits in~$\cC$ rather than merely of all finite colimits. This is important for the proof of Theorem~\ref{thm:termporalized-sd-category}, which can be then carried out in the same way as before. The proof of Theorem~\ref{thm:temporalization-of-spined-sd-category}, building upon the result of~\ref{thm:termporalized-sd-category}, does not again use the finiteness of the indexing graph.
	Allowing infinite graphs would furthermore require us to replace the maximum and minimum in Definition~\ref{defi:width-Gamma-width} (width notions) by a supremum and infimum, respectively.
\end{rmk}

\section{Case studies: tree-width analogues for temporal graphs}
\label{section:case-studies}

Theorem~\ref{thm:temporalization-of-spined-sd-category} can be applied to many different contexts and provides temporal analogues of ordinary static spined sd-categories. As explained in Paragraph~\ref{paragraph:width-in-spined-sd-category}, spined sd-categories constitute a systematic framework to study generalized notions of tree-decompositions and tree-widths of classical graph theory, which is the reason why the temporalized spined sd-categories of Theorem~\ref{thm:temporalization-of-spined-sd-category} are expected to create a systematic setting to investigate time-varying generalizations of tree-decompositions and tree-widths. We will illustrate this by the following three case studies, resulting in time-dependent concepts of ordinary tree-width, complemented tree-width and the tree-independence number. The idea is to consider the static spined sd-categories of~\cite{Bumpus-et-al_Structured-Decompositions} that recover the time-independent notions of ordinary tree-width, complemented tree-width and the tree-independence number and to apply our theorem to those contexts. Although not obvious in the first place, we will show that, perhaps under slight modifications of the assumptions, this leads to sensible temporal analogues that recover their static counterparts, which also justifies that Theorem~\ref{thm:temporalization-of-spined-sd-category} indeed gives rise to a reasonable temporal generalization of the ordinary notion of a spined sd-category.

\subsection{Case study: Ordinary tree-width}
\label{subsection:case-study-ordinary-tree-width}

In this section, we want to temporalize the spined sd-category of \cite[Proposition 3.1.1]{Bumpus-et-al_Structured-Decompositions}, which consists of the category $\Grph$ of finite simple undirected graphs (having neither loops nor multiple edges), the index class $\{\textup{trees}\}$ and the spine whose $n$-th layer is the full subcategory spanned by the complete graphs of order at most~$n$.
By \cite[Proposition 3.1.1]{Bumpus-et-al_Structured-Decompositions}, this yields a spined sd-category~$\Gamma$, and the $\Gamma$-width of a graph is its classical tree-width.
In order to transfer this example to our setting, a naive idea would thus be to apply the temporalization method of Theorem~\ref{thm:temporalization-of-spined-sd-category} to this spined sd-category.
However, the problem is that the underlying category $\Grph$ of finite simple undirected graphs is not finitely cocomplete, that is, it violates axiom~(T\ref{axiom:T2}). Another problem arising with simple graphs is that we are now not only interested in graphs themselves but also in graph morphisms and that those are not allowed to smash edges into single vertices. (This will become relevant in the proof of Proposition~\ref{prop:case-study-ordinary-tree-width} below.)
To remedy these issues, we replace the category $\Grph$ by the category $\Grefl$ of finite reflexive graphs.
Here, a \textit{finite reflexive graph}, or simply, a \textit{reflexive graph}, is a pair $G=(V(G),E(G))$ consisting of a finite set~$V(G)$ and a set~$E(G)$ of one- or two-element subsets of~$V(G)$ such that $\{v\}$ is contained in $E(G)$ for each $v\in V(G)$. Using the ordinary graph theoretical terminology, reflexive graphs are precisely the finite undirected graphs with exactly one loop at each vertex but no multiple edges.
Given vertices $v$ and~$w$ of~$G$, we denote the edges of~$G$ again by~$vw$, meaning the set $\{v,w\}$ if $v$ and~$w$ are distinct and the singleton~$\{v\}$ if $v$ and~$w$ agree.
A \textit{morphism} $f\co G\to H$ of reflexive graphs is a map of vertex sets $f\co V(G)\to V(H)$ preserving edges, that is, $f$~takes any edge $vw$ in~$G$ to an edge $f(v)f(w)$ in~$H$.
Reflexive graphs and their morphisms form a category, which we denote by~$\Grefl$. By \cite[Appendix~A.1]{Bumpus-et-al_Structured-Decompositions}, this is a quasitopos.
Our index class~$\cG$ is still the class of all trees, and for each non-negative integer~$n$, we let $\Omega_n$ be the full subcategory of $\Grefl$ whose objects are the complete reflexive graphs of order at most~$n$. We put $\Omega\coloneqq (\Omega_n)_{n\geq0}$ and write $\iota_n\co \Omega_n\hookrightarrow\Grefl$ for the inclusion functors.

The same argument as in the proof of Proposition~3.1.1 in~\cite{Bumpus-et-al_Structured-Decompositions} shows that the triple $\Gamma\coloneqq (\Grefl, \{\textup{trees}\}, \Omega)$ is a spined sd-category.
By Theorem~\ref{thm:temporalization-of-spined-sd-category}, given any sub-join-semlilattice $\timeS\subseteq\timeT$ of a finite discrete time category~$\timeT$, $\Gamma$~induces a spined sd-category $\widehat{\Gamma}\coloneqq \sBigl( \Pe(\timeT, \Grefl), \{\textup{trees}\}, \widehat{\Omega} \sBigr)$.

Let us compute the $\Gamma$-size of a reflexive graph. (See also \cite[Proposition 3.1.1]{Bumpus-et-al_Structured-Decompositions}.)

\begin{lemma}
	A \textit{morphism} $f\co G\to H$ of reflexive graphs is a monomorphism in the category~$\Grefl$ if and only if it is injective as a map of vertex sets $f\co V(G)\to V(H)$.
\end{lemma}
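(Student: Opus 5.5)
We prove the two implications separately; the only tool needed is a representing object for vertices.

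For the ``if'' direction, let $f\co G\to H$ be injective on vertex sets, and let $g,h\co K\to G$ be morphisms of reflexive graphs with $fg=fh$. A morphism of reflexive graphs is determined by its underlying map of vertex sets, since edges carry no data beyond their endvertices (there are no multiple edges). The maps $g,h\co V(K)\to V(G)$ satisfy $f\comp g=f\comp h$ as set maps, so injectivity of $f$ gives $g=h$ on vertices, hence $g=h$ in~$\Grefl$.

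For the ``only if'' direction, suppose $f$ is a monomorphism in~$\Grefl$ and $v,w\in V(G)$ satisfy $f(v)=f(w)$. Let $\mathbf{1}$ be the reflexive graph with a single vertex~$\ast$ and the single loop $\{\ast\}$. A map $V(\mathbf{1})\to V(G)$ sending $\ast\mapsto v$ preserves edges, because the only edge $\{\ast\}$ is sent to the loop $\{v\}$, which lies in $E(G)$ by reflexivity. This gives morphisms $\bar v,\bar w\co\mathbf{1}\to G$ with $f\bar v=f\bar w$, since both send $\ast$ to $f(v)=f(w)$. Because $f$ is monic, $\bar v=\bar w$, so $v=w$ and $f$ is injective on vertices. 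Equivalently, $\mathbf{1}$ represents the vertex functor $V\co\Grefl\to\Set$, and representable functors preserve monomorphisms.

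The only step that needs care is the use of reflexivity in the ``only if'' direction. It guarantees that every vertex is the image of a morphism from~$\mathbf{1}$. In $\Grph$ the same argument works with a loopless single vertex, but in $\Grefl$ the loop must be present for $\bar v$ to be a morphism, and it is, by definition.
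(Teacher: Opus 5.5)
Your proof is correct and takes the same approach as the paper. The ``if'' direction uses that a morphism is determined by its vertex map, and the ``only if'' direction tests $f$ against the one-vertex looped reflexive graph, where reflexivity of $G$ makes $\ast\mapsto v$ a morphism; your remark that this graph represents the vertex functor is a tidy repackaging of the same step.
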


\begin{proof}
	It is obvious that injectivity of $f\co V(G)\to V(H)$ is sufficient to guarantee that $f$ is a monomorphism in~$\Grefl$ since equality of graph morphisms is characterized pointwise. Conversely, let us suppose that $f$ is monic. Let $v,w\in V(G)$ be any vertices with $f(v)=f(w)$. We denote by $J$ the reflexive graph with exactly one looped vertex~$x$. Then there are graph morphisms $g,h\co J\to G$ defined by $g(x)\coloneqq v$ and $h(x)\coloneqq w$, respectively. They satisfy $fg=fh$, which implies that $g=h$ and therefore $v=w$.
\end{proof}

Thus, given any reflexive graph~$G$, its $\Gamma$-size $s_\Gamma(G)$ is the minimum non-negative integer~$n$ for which there exist a graph $K$ in~$\widehat{\Omega}_n$ and a graph morphism $f\co G\to K$ which is injective as a map $f\co V(G)\to V(K)$. Since $K$ is complete, every such map defines a graph morphism, and since the sets $V(G)$ and $V(K)$ are finite, existence of an injective map $f\co V(G)\to V(K)$ is equivalent to the condition $\abs{V(G)}\leq\abs{V(K)}$. Therefore, $s_\Gamma(G)$ is the minimum non-negative integer~$n$ such that there exists a complete reflexive graph~$K$ of order at most~$n$ with $\abs{V(G)}\leq\abs{V(K)}$. 
Considering the usual inclusion of~$G$ into the complete reflexive graph on $\abs{V(G)}$ vertices, we see that the $\Gamma$-size of~$G$ is precisely its number of vertices,
\[ s_\Gamma(G) = \abs{V(G)}. \]
We now compute the $\widehat{\Gamma}$-size of a persistent narrative in~$\Grefl$. 

\begin{lemma}\label{lemma:monomorphisms-in-Pe-T-Grefl}
	The monomorphisms in the category $\Pe(\timeT,\Grefl)$ are precisely the natural transformations whose components are injective maps of vertex sets.
\end{lemma}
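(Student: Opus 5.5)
We prove the two implications separately, reducing everything to the previous lemma, which identifies the monomorphisms in $\Grefl$ with the morphisms that are injective on vertex sets.

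\emph{Componentwise injective implies monic.} Let $\varphi\co H\Rightarrow F$ be a natural transformation between persistent narratives whose components $\varphi_t\co H(t)\to F(t)$ are injective on vertex sets. By the previous lemma each $\varphi_t$ is a monomorphism in $\Grefl$. Given parallel morphisms $\alpha,\beta\co K\Rightarrow H$ in $\Pe(\timeT,\Grefl)$ with $\varphi\alpha=\varphi\beta$, we have $\varphi_t\alpha_t=\varphi_t\beta_t$ for every $t\in\Ob(\timeT)$. Hence $\alpha_t=\beta_t$ for all $t$, so $\alpha=\beta$.

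\emph{Monic implies componentwise injective.} Here we must test against narratives, not arbitrary presheaves. This is the main point, since $\Pe(\timeT,\Grefl)$ is only a full subcategory of $[\timeT^\op,\Grefl]$.
\begin{itemize}
\item \emph{First route.} Take $\mathfrak{S}$ as the sheafification functor (T\ref{axiom:T3}) available for $\Grefl$. The inclusion $\Pe(\timeT,\Grefl)\hookrightarrow[\timeT^\op,\Grefl]$ is a right adjoint, so it preserves pullbacks. Therefore a mono in $\Pe(\timeT,\Grefl)$, whose kernel pair is trivial, is also a mono in $[\timeT^\op,\Grefl]$. Evaluation $\textup{ev}_t$ preserves pullbacks, hence monos, because pullbacks in the presheaf category are computed pointwise ($\Grefl$ has pullbacks). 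So each $\varphi_t$ is monic in $\Grefl$, and by the previous lemma injective on vertices. This is exactly the argument in the proof of Theorem~\ref{thm:termporalized-sd-category}, so it suffices to cite that reasoning.
\item \emph{Backup route, if the existence of $\mathfrak{S}$ for $\Grefl$ has not yet been established at this point.} Argue directly. Suppose $v\neq w$ in $H([a,b])$ with $\varphi_{[a,b]}(v)=\varphi_{[a,b]}(w)$. Let $K$ be the representable-like narrative $K(t)=$ one looped vertex if $t\subseteq[a,b]$ and the empty graph otherwise, with the evident restriction maps. Check that $K$ satisfies the pullback condition: for a cover $[c,p],[p,d]$ of $[c,d]$, the value $K([c,d])$ is nonempty iff $[c,d]\subseteq[a,b]$, iff both pieces are contained in $[a,b]$. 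By Yoneda-type reasoning, morphisms $K\Rightarrow H$ correspond to vertices of $H([a,b])$, by restricting along the maps $H([a,b])\to H(t)$. Then $v$ and $w$ give distinct morphisms $K\Rightarrow H$ that are equalized by $\varphi$, contradicting monicity.
\end{itemize}

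The only delicate step is verifying that the test object is really a persistent narrative, or equivalently that the sheafification hypothesis is available. We will use the first route and cite the pointwise-pullback argument.
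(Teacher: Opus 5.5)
Your proposal is correct, and the route you commit to is the paper's own. The paper argues that the inclusion $\Pe(\timeT,\Grefl)\hookrightarrow[\timeT^\op,\Grefl]$ is a right adjoint and so preserves monos, that evaluation preserves monos because pullbacks are computed pointwise, and that the previous lemma then turns monic components into injective vertex maps; the converse is componentwise cancellation in the full subcategory.

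Your backup argument is also valid. The narrative $K$ (a single looped vertex on subintervals of $[a,b]$, empty elsewhere) does satisfy the pullback condition, and it corepresents $V(H([a,b]))$. Its advantage is that it does not depend on the sheafification functor existing for $\Grefl$, which the paper's proof assumes without justification.
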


\begin{proof}
	Given a natural transformation $\varphi\co X\Rightarrow Y$ with $X,Y\in\Ob(\Pe(\timeT,\Grefl))$, if $\varphi$ is a monomorphism in $\Pe(\timeT,\Grefl)$, then it is also a monomorphism in the functor category $[\timeT^\op,\Grefl]$ because 
	the inclusion $\Pe(\timeT,\Grefl)\hookrightarrow[\timeT,\Grefl]$ is a right adjoint. 
	Therefore, \cite[Corollary 2.15.3]{Borceux_Handbook-I} implies that each component $\varphi_t\co X(t)\to Y(t)$ is a monomorphism in~$\Grefl$. By the previous lemma, this means that $\varphi_t$ is injective as a map $\varphi_t\co V(X(t))\to V(Y(t))$.
	Conversely, we suppose that $\varphi_t$ is injective for every time interval $t\in\Ob(\timeT)$. Then by the same lemma, $\varphi_t\co X(t)\to Y(t)$ is monic in $\Grefl$ and thus in $[\timeT,\Grefl]$. Since $\Pe(\timeT,\Grefl)$ was defined as a full subcategory of $[\timeT,\Grefl]$, $\varphi$~is a morphism in $\Pe(\timeT,\Grefl)$, and it is obvious that it is also a monomorphism there.
\end{proof}

This shows that the $\widehat{\Gamma}$-size $s_{\widehat{\Gamma}}(X)$ of a persistent narrative~$X$ in $\Grefl$ on time~$\timeT$ is the minimum integer $n\geq0$ for which there exists an object $F\in\Ob(\widehat{\Omega}_n)$ together with a natural transformation $\varphi\co X\Rightarrow F$ whose component maps are injective. Recall that the objects of~$\widehat{\Omega}_n$ are the persistent narratives $F\co \timeT^\op\to\Grefl$ such that for every $s\in\Ob(\timeS)$, the finite reflexive graph~$F(s)$ lies in~$\Omega_n$, that is, $F(s)$~is complete and has order at most~$n$.
Thus, $s_{\widehat{\Gamma}}(X)$ is the minimum integer $n\geq0$ for which there is a persistent narrative $F\co \timeT^\op\to\Grefl$ along with a natural transformation $\varphi\co X\Rightarrow F$ such that for every $t\in\Ob(\timeT)$, the map $\varphi_t\co V(X(t))\to V(F(t))$ is injective, and for every $s\in\Ob(\timeS)$, $F(s)$~is a complete reflexive graph of order at most~$n$.

\begin{prop}\label{prop:case-study-ordinary-tree-width}
	The $\widehat{\Gamma}$-size of a persistent narrative $X\co \timeT^\op\to\Grefl$ is the maximum order of the reflexive graphs~$X(s)$ with $s\in\Ob(\timeS)$,
	\[ s_{\widehat{\Gamma}}(X) = \max_{s\in\Ob(\timeS)} \abs{V(X(s))}. \]
\end{prop}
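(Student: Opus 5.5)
Write $m\coloneqq \max_{s\in\Ob(\timeS)}\abs{V(X(s))}$. By the characterization of $s_{\widehat{\Gamma}}$ just given, I will prove the two inequalities $s_{\widehat{\Gamma}}(X)\geq m$ and $s_{\widehat{\Gamma}}(X)\leq m$ separately.

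\emph{Lower bound.} Suppose $F\in\Ob(\widehat{\Omega}_n)$ and $\varphi\co X\Rightarrow F$ is a monomorphism. By Lemma~\ref{lemma:monomorphisms-in-Pe-T-Grefl}, each component $\varphi_s$ is injective on vertices. For $s\in\Ob(\timeS)$ we know $\abs{V(F(s))}\leq n$, so $\abs{V(X(s))}\leq n$. Taking the maximum over $s$ gives $m\leq n$.

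\emph{Upper bound.} This is the real content. I must construct a persistent narrative $F\co\timeT^\op\to\Grefl$ and a monomorphism $X\Rightarrow F$ such that $F(s)$ is complete of order at most $m$ for every $s\in\Ob(\timeS)$. The plan is to take $F$ to be the \emph{vertexwise completion} of $X$. Set $V(F(t))\coloneqq V(X(t))$ and let $F(t)$ be the complete reflexive graph on this set. The restriction maps $F(i)$ are the same vertex maps as $X(i)$; these are automatically morphisms, since every map into a complete graph preserves edges. The identity on vertices then gives a natural transformation $X\Rightarrow F$ that is injective componentwise, hence a monomorphism by Lemma~\ref{lemma:monomorphisms-in-Pe-T-Grefl}. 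It also satisfies $\abs{V(F(s))}=\abs{V(X(s))}\leq m$ for $s\in\timeS$.

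The step I expect to be the main obstacle is checking that $F$ is a persistent narrative, i.e.\ that it sends the Johnstone squares to pullbacks in $\Grefl$. For this I will first describe pullbacks in $\Grefl$: the vertex set is the set-theoretic pullback of the vertex sets, and $(a,b)$ is adjacent to $(a',b')$ exactly when both coordinates are adjacent. Since $X$ is a narrative, $V(X([a,b]))$ is the set pullback of $V(X([a,p]))$ and $V(X([p,b]))$ over $V(X([p,p]))$. The vertex functor preserves pullbacks, being representable by the one-looped-vertex graph. In a pullback of complete graphs, any two vertex pairs have both coordinates adjacent, so the pullback is again complete. Hence the square for $F$ is a pullback.

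If completing at \emph{all} times in $\timeT$ causes trouble, a fallback is to complete only on $\timeS$ and sheafify using (T\ref{axiom:T3}). The construction above, however, avoids this entirely.
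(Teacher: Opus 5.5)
Your proposal is correct and follows the paper's proof essentially step for step. The lower bound is the same: injectivity of the components $\varphi_s$ gives $n\geq\abs{V(F(s))}\geq\abs{V(X(s))}$. The upper bound uses the same vertexwise complete reflexive graph $F(t)$ on $V(X(t))$, with $F(i)=X(i)$ and identity components. Your explicit description of pullbacks in $\Grefl$ (vertex set the set-theoretic pullback, adjacency coordinatewise, so a pullback of complete reflexive graphs is again complete) spells out the step the paper only asserts when it says the vertex-set pullback ``induces a pullback square in $\Grefl$''. As you suspected, the sheafification fallback is not needed.
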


\begin{proof}
	To show the inequality~$\geq$, let $n$ be any non-negative integer, and let $F\co \timeT^\op\to\Grefl$ be any persistent narrative and $\varphi\co K\Rightarrow F$ any natural transformation with the properties that $\varphi_t\co V(X(t))\to V(F(t))$ is injective for all $t\in\Ob(\timeT)$ and the finite reflexive graph~$F(s)$ is complete of order at most~$n$ for all $s\in\Ob(\timeS)$. Then for each $s\in\Ob(\timeS)$, we have $n\geq\abs{V(F(s))}\geq\abs{V(X(s))}$, by injectivity of $\varphi_s$, which implies that $s_{\widehat{\Gamma}}(X)\geq \max_{s\in\Ob(\timeS)} \abs{V(X(s))}$.
	
	To prove the reverse inequality, we define $F(t)$ for every $t\in\Ob(\timeT)$ to be the complete reflexive graph on the set $V(F(t))\coloneqq V(X(t))$. For every inclusion map $i\co t\hookrightarrow t'$ in~$\timeT$, we put $F(i)\coloneqq X(i)\co V(F(t'))=V(X(t'))\to V(X(t))=V(F(t))$. Note that completeness and reflexivity of $F(t)$ ensures that $F(i)$ is a morphism of finite reflexive graphs $F(i)\co F(t')\to F(t)$. (Note that reflexivity is essential for the case that $X(t')$ has two distinct non-adjacent vertices $v$ and~$w$ which have the same image $x\in V(X(t))$ under $X(i)$. Since $v$ and $w$ are adjacent in~$F(t')$, the existence of a loop at~$x$ in~$F(t)$ is a necessary condition for $F(i)$ to be a graph morphism since graph morphisms are not allowed to smash edges into single vertices. This is also the reason why we have replaced the category of ordinary simple graphs considered in \cite[Section~3.1]{Bumpus-et-al_Structured-Decompositions} by reflexive graphs.) 
	
	By functoriality of $X\co \timeT^\op\to\Grefl$, the above defines a functor $F\co\timeT^\op\to\Grefl$. It is a persistent narrative in $\Grefl$:
	Given any square in~$\timeT$ of shape
	\begin{equation*}
		\begin{tikzcd}
			{[p,p]} \arrow[r, "j", hook] \arrow[d, "i", hook, swap] & {[p,b]} \arrow[d, "\ell", hook]
			\\
			{[a,p]} \arrow[r, "k", hook, swap] & {[a,b]},
		\end{tikzcd}
	\end{equation*}
	applying the persistent narrative~$X$ provides a pullback square in $\Grefl$ of the form
	\begin{equation*}
		\begin{tikzcd}
			X([a,b]) \arrow[r, "X(j)"] \arrow[d, "X(i)", swap] \arrow[rd, phantom, "\mathlarger{\lrcorner}", very near start] 
			& X([p,b]) \arrow[d, "X(\ell)"]
			\\
			X([a,p]) \arrow[r, "X(k)", swap] 
			& X([p,p]),
		\end{tikzcd}
	\end{equation*}
	whose underlying square of vertex sets is a pullback square in the category of finite sets:
		\begin{equation*}
			\begin{tikzcd}
				V(X([a,b])) \arrow[r, "X(j)"] \arrow[d, "X(i)", swap] \arrow[rd, phantom, "\mathlarger{\lrcorner}", very near start] 
				& V(X([p,b])) \arrow[d, "X(\ell)"]
				\\
				V(X([a,p])) \arrow[r, "X(k)", swap] 
				& V(X([p,p])).
			\end{tikzcd}
		\end{equation*}
	By definition of $F\co\timeT^\op\to\Grefl$, this square becomes 
	\begin{equation*}
		\begin{tikzcd}
			V(F([a,b])) \arrow[r, "F(j)"] \arrow[d, "F(i)", swap] \arrow[rd, phantom, "\mathlarger{\lrcorner}", very near start] 
			& V(F([p,b])) \arrow[d, "F(\ell)"]
			\\
			V(F([a,p])) \arrow[r, "F(k)", swap] 
			& V(F([p,p])).
		\end{tikzcd}
	\end{equation*}
	This in turn induces a pullback square in $\Grefl$:
	\begin{equation*}
		\begin{tikzcd}
			F([a,b]) \arrow[r, "F(j)"] \arrow[d, "F(i)", swap] \arrow[rd, phantom, "\mathlarger{\lrcorner}", very near start] 
			& F([p,b]) \arrow[d, "F(\ell)"]
			\\
			F([a,p]) \arrow[r, "F(k)", swap] 
			& F([p,p]),
		\end{tikzcd}
	\end{equation*}
	which shows that $F\in\Ob(\Pe(\timeT,\Grefl))$.
	By construction, the reflexive graph $F(s)$ is complete for each $s\in\Ob(\timeS)$. Moreover, we have $\abs{V(F(s))} = \abs{V(X(s))} \leq \max_{s'\in\Ob(\timeS)} \abs{V(X(s'))}$, so $F(s)$ is a complete reflexive graph of order at most $\max_{s\in\Ob(\timeS)} \abs{V(X(s))}$.
	
	It remains to show that there exists a natural transformation $\varphi\co X\Rightarrow F\co\timeT^\op\to\Grefl$ such that for every $t\in\Ob(\timeT)$, the map $\varphi_t\co V(X(t))\to V(F(t))$ is injective.
	By construction, we have an identity set map $\varphi_t\coloneqq\id{}\co V(X(t))\to V(F(t))$, and since $F(t)$ is reflexive and complete, this is a morphism $\varphi_t\co X(t)\to F(t)$ in $\Grefl$ (but not in general the identity morphism since $X(t)$ is not necessarily complete). The family $\varphi\coloneqq (\varphi_t)_{t\in\Ob(\timeT)}$ is a natural transformation $\varphi\co X\Rightarrow F$ because given any inclusion map $i\co t\hookrightarrow t'$ in~$\timeT$, then by construction of $F(i)=X(i)$ and $\varphi_t=\id{V(X(t))}$, $\varphi_{t'}=\id{V(X(t'))}$, we have an obvious commutative square of finite sets
	\begin{equation*}
	\begin{tikzcd}
		V(X(t')) \arrow[r, "X(i)"] \arrow[d, "\varphi_{t'}", swap] & V(X(t)) \arrow[d, "\varphi_t"] \\
		V(F(t')) \arrow[r, "F(i)", swap] & V(F(t)).
	\end{tikzcd}
	\end{equation*}
	Thus, the corresponding square in $\Grefl$
	\begin{equation*}
	\begin{tikzcd}
		X(t') \arrow[r, "X(i)"] \arrow[d, "\varphi_{t'}", swap] & X(t) \arrow[d, "\varphi_t"] \\
		F(t') \arrow[r, "F(i)", swap] & F(t)
	\end{tikzcd}
	\end{equation*}
	commutes, as well.
	We now have a natural transformation $\varphi\co X\Rightarrow F\co \timeT^\op\to\Grefl$ from $X$ to the persistent narrative $F\co\timeT^\op\to\Grefl$. For every $t\in\Ob(\timeT)$, the map $\varphi_t\co V(X(t))\to V(F(t))$ is the identity map and therefore injective, and for every $s\in\Ob(\timeS)$, $F(s)$~is a complete reflexive graph with $\abs{V(F(s))} = \abs{V(X(s))} \leq \max_{s'\in\Ob(\timeS)} \abs{V(X(s'))}$.
	This proves the desired inequality $s_{\widehat{\Gamma}}(X) \leq \max_{s'\in\Ob(\timeS)} \abs{V(X(s'))}$.
\end{proof}

Hence, we have shown that the $\widehat{\Gamma}$-size of a persistent narrative $X\co\timeT^\op\to\Grefl$ is given by
\[ s_{\widehat{\Gamma}}(X) = \max_{s\in\Ob(\timeS)} \abs{V(X(s))} = \max_{s\in\Ob(\timeS)} s_\Gamma(X(s)), \]
that is, it is the maximum $\Gamma$-size of the graphs~$X(s)$ with $s\in\Ob(\timeS)$.

We now pass on to the notions of $\Gamma$-width and $\widehat{\Gamma}$-width.
Let $J$ be a tree. The $\Gamma$-width of a structured decomposition $d\co \intJ\to\Grefl$ is
\[ w_\Gamma(d) = \max_{v\in V(J)} s_\Gamma (d(v)) - 1 = \max_{v\in V(J)} \abs{d(v)}, \]
so this is the classical notion of width of a tree-decomposition from ordinary graph theory.
Given a structured decomposition of the form $d\co\intJ\to\Pe(\timeT,\Grefl)$, its $\widehat{\Gamma}$-width is given by
\begin{align*}
	w_{\widehat{\Gamma}}(d) 
	&= \max_{v\in V(J)} s_{\widehat{\Gamma}}(d(v)) - 1 \\
	&= \max_{v\in V(J)} \max_{s\in\Ob(\timeS)} s_\Gamma(d(v)(s)) - 1 \\
	&= \max_{s\in\Ob(\timeS)} \sBigl ( \max_{v\in V(J)} \abs{d_s(v)}-1 \sBigr ) \\
	&= \max_{s\in\Ob(\timeS)} w_\Gamma(d_s)
\end{align*}
where for each $s\in\Ob(\timeS)$, $d_s\co\intJ\to\Grefl$ denotes the composite functor $\intJ\xrightarrow{d}\Pe(\timeT,\Grefl)\hookrightarrow[\timeT^\op,\Grefl]\xrightarrow{\textup{ev}_s}\Grefl$.
Thus, the $\widehat{\Gamma}$-width $w_{\widehat{\Gamma}}(d)$ of a tree-shaped structured decomposition $d\co\intJ\to\Grefl$ in $\Grefl$ is the maximum $\Gamma$-width $\max_{s\in\Ob(\timeS)} w_\Gamma(d_s)$ of the component structured decompositions $d_s\co \intJ\to\Grefl$ over all $s\in\Ob(\timeS)$.
As a consequence, we recover the maximum tree-width of temporal graphs, 
where the maximum refers to the sub-join-semilattice $\timeS\subseteq\timeT$. Varying~$\timeS$ thus provides a flexible tool to modify the resulting measure of time-dependent temporal tree-resemblance of temporal graphs.
For example, if $\timeS$ is the sub-join-semilattice containing all intervals of length at least~$k$ for some fixed 
integer $k\geq0$, then the temporalized tree-width~$w_{\widehat{\Gamma}}(d)$ only respects snapshots of sufficient length.

\subsection{Case study: Complemented tree-width}

We now pass on to the complemented tree-width. In \cite[Section~3.3]{Bumpus-et-al_Structured-Decompositions}, the authors construct a spined sd-category to capture the measure of complemented tree-width of a finite simple undirected graph~$G$, which is defined as the ordinary tree-width of the \textit{complement}~$\overline{G}$ of~$G$, that is, the graph with the same vertex set $V(\overline{G})\coloneqq V(G)$, and for any two distinct vertices $v$ and~$w$, an edge $vw\in E(\overline{G})$ iff $vw$ is not an edge of~$G$.
Using our theory, we will now establish a time-respecting version of complemented tree-width, by temporalizing the spined sd-category of \cite[Proposition~3.3.4]{Bumpus-et-al_Structured-Decompositions} and computing the respective width.
First, we define a category~$\overline{\Grph}$ as follows. (See also \cite[Definition~3.3.1]{Bumpus-et-al_Structured-Decompositions}.)
The objects of~$\overline{\Grph}$ are the finite simple undirected graphs (without loops and without multiple edges). As usual, we will simply refer to them as \textit{graphs}. The morphisms of $\overline{\Grph}$ from $G$ to~$H$ are the graph morphisms from $\overline{G}$ to~$\overline{H}$, that is, the set maps $f\co V(G)\to V(H)$ such that for any $v,w\in V(G)$, if $f(v)$ and $f(w)$ are joint by an edge in~$H$, then $v$ and $w$ are joint by an edge in~$G$. We refer to the morphisms of~$\overline{\Grph}$ as \textit{complement (graph) morphisms}. 
The composition and identity morphisms in $\overline{\Grph}$ are defined in the obvious way.
For every integer $n\geq0$, let $\Omega_n$ denote the full subcategory of $\overline{\Grph}$ whose objects are the edgeless graphs of order at most~$n$. As usual, we put $\Omega\coloneqq(\Omega_n)_{n\geq0}$ and write $\iota_n\Omega_n\hookrightarrow\overline{\Grph}$ for the inclusion functors.
By \cite[Proposition 3.3.4]{Bumpus-et-al_Structured-Decompositions}, the triple $\Gamma\coloneqq(\overline{\Grph}, \{\textup{trees}\},\Omega)$ is a spined sd-category.
Beware that it does not satisfy all assumptions of Theorem~\ref{thm:temporalization-of-spined-sd-category}. However, $\overline{\Grph}$ can be embedded into a presheaf category, which meets the requirements, so we may apply the theorem.
Now, for every sub-join-semilattice $\timeS$ of a finite discrete time category~$\timeT$, we obtain a spined sd-category $\widehat{\Gamma}\coloneqq \sBigl( \Pe(\timeT,\overline{\Grph}),\{\textup{trees}\},\widehat{\Omega} \sBigr)$.
We compute the $\Gamma$-width of a graph. (See also \cite[Proposition 3.3.4]{Bumpus-et-al_Structured-Decompositions}.)

\begin{lemma}\label{lemma:monomorphisms-in-overline-Grph}
	A complement morphism $f\co G\to H$ of graphs is a monomorphism in $\overline{\Grph}$ if and only if the map $f\co V(G)\to V(H)$ is injective.
\end{lemma}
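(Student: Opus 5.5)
I will mirror the argument given above for the analogous lemma about reflexive graphs, with the complement graph taking the place of the original graph throughout. The key fact is that a complement morphism $G\to H$ is by definition a set map $V(G)\to V(H)$ satisfying a condition on edges. Consequently, two complement morphisms with the same domain and codomain are equal exactly when their underlying vertex maps agree. Composition in $\overline{\Grph}$ is composition of the underlying maps.

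For sufficiency, suppose $f\co V(G)\to V(H)$ is injective, and let $g,h\co A\to G$ be complement morphisms with $fg=fh$. Then the underlying vertex maps satisfy $f\comp g=f\comp h$ as set maps. Injectivity of $f$ gives $g=h$ as set maps, and hence as morphisms of $\overline{\Grph}$. So $f$ is a monomorphism.

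For necessity, I will test $f$ against a one-vertex graph, as in the reflexive case. Let $J$ be the graph with a single vertex $x$ and no edges. Suppose $f$ is monic and $v,w\in V(G)$ satisfy $f(v)=f(w)$. I define $g,h\co J\to G$ by $g(x)\coloneqq v$ and $h(x)\coloneqq w$. These must be checked to be complement morphisms. The condition requires that whenever $g(y)$ and $g(z)$ are adjacent in $G$, the vertices $y$ and $z$ are adjacent in $J$. The only possible pair is $y=z=x$, and $g(x)g(x)$ is never an edge of $G$, since $G$ is simple and loopless. So the condition holds vacuously, and likewise for $h$. Since $fg=fh$ and $f$ is monic, we get $g=h$, that is, $v=w$. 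Hence $f$ is injective.

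The only point needing care is this well-definedness check for $g$ and $h$. It is where the looplessness of simple graphs is used, playing the role that reflexivity played in the $\Grefl$ case. Everything else is routine.
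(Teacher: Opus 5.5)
Your proof is correct and follows the paper's argument: injectivity suffices because complement morphisms are determined by their vertex maps, and necessity comes from testing $f$ against the two maps out of the one-vertex graph that pick out $v$ and $w$. You also check explicitly that these two maps are complement morphisms (vacuously, because $G$ has no loops), a step the paper leaves implicit.
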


\begin{proof}
	If $f\co V(G)\to V(H)$ is injective, then $f\co G\to H$ is obviously a monomorphism in $\overline{\Grph}$ so suppose that $f$ is monic, and let $v,w\in V(G)$ with $f(v)=f(w)$. Let $J$ be the graph with precisely one vertex~$x$, and define graph morphisms $g,h\to J\to G$ by $g(x)\coloneqq v$ and $h(x)\coloneqq w$, respectively. We have $fg=fh$, so $g=h$, and thus, $v=w$.
\end{proof}

Therefore, the $\Gamma$-size $s_\Gamma(G)$ of a graph~$G$ is the minimum integer $n\geq0$ such that there is a graph~$Q$ in $\Omega_n$ together with a complement graph morphism $f\co G\to Q$ which is injective as a set map $f\co V(G)\to V(Q)$. Since $Q$ has no edges, any such map is a complement graph morphism, and by finiteness of the sets $V(G)$ and $V(Q)$, there exists an injective map from $V(G)$ to $V(Q)$ if and only if $\abs{V(G)}\leq\abs{V(Q)}$. Thus, $s_\Gamma(G)$ is the minimum $n\geq0$ such that there is an edgeless graph~$Q$ of order at most~$n$ satisfying $\abs{V(G)}\leq\abs{V(Q)}$. Considering the usual inclusion of~$G$ into the edgeless graph with vertex set~$V(G)$, we see that 
\[ s_\Gamma(G) = \abs{V(G)}. \]
Let us now compute the $\widehat{\Gamma}$-size of a persistent narrative in $\overline{\Grph}$. 

\begin{lemma}
	The monomorphisms in the category $\Pe(\timeT,\overline{\Grph})$ are exactly the natural transformations whose component maps are injective.
\end{lemma}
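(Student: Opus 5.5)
The plan is to follow the proof of Lemma~\ref{lemma:monomorphisms-in-Pe-T-Grefl}, replacing the earlier lemma on reflexive graphs by Lemma~\ref{lemma:monomorphisms-in-overline-Grph}. The argument has two directions, and one point needs checking: whether $\Pe(\timeT,\overline{\Grph})\hookrightarrow[\timeT^\op,\overline{\Grph}]$ reflects and preserves monomorphisms. We may not have a left adjoint here, since $\overline{\Grph}$ itself does not satisfy all hypotheses. So instead of appealing to a right adjoint, I will argue directly with generalized elements.

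For sufficiency, let $\varphi\co X\Rightarrow Y$ be a natural transformation between persistent narratives in $\overline{\Grph}$ whose components $\varphi_t\co V(X(t))\to V(Y(t))$ are injective. By Lemma~\ref{lemma:monomorphisms-in-overline-Grph}, each $\varphi_t$ is monic in $\overline{\Grph}$. Now suppose $\varphi\alpha=\varphi\beta$ for $\alpha,\beta\co Z\Rightarrow X$. Then $\varphi_t\alpha_t=\varphi_t\beta_t$ for every $t$, so $\alpha_t=\beta_t$ componentwise, and hence $\alpha=\beta$. Thus $\varphi$ is monic in $\Pe(\timeT,\overline{\Grph})$; this uses only that natural transformations are determined by their components.

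For necessity, suppose $\varphi$ is monic in $\Pe(\timeT,\overline{\Grph})$, fix $t=[a,b]\in\Ob(\timeT)$, and take $v,w\in V(X(t))$ with $\varphi_t(v)=\varphi_t(w)$. The idea is to build a test narrative $Z$ that plays the role of the one-vertex graph $J$ in Lemma~\ref{lemma:monomorphisms-in-overline-Grph}, but placed at time $t$. This is the main obstacle: $Z$ must be a persistent narrative, not merely a presheaf, and it must admit maps $g,h\co Z\Rightarrow X$ selecting $v$ and $w$.

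My first attempt is to let $Z$ be the narrative represented at $t$. Set $Z(t')$ to be the one-vertex graph if $t'\subseteq t$ and the empty graph otherwise, with the unique restriction maps. These restrictions go from $Z(t')$ to $Z(t'')$ for $t''\subseteq t'$, and are well defined because empty-to-anything is fine and one-vertex-to-one-vertex is fine. Define $g_{t'}$ to send the vertex to $X(t'\hookrightarrow t)(v)$, and similarly $h_{t'}$ with $w$. Naturality holds by functoriality of $X$, and the maps are complement morphisms because the one-vertex graph has no pairs of distinct vertices to check.

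The sheaf condition for $Z$ is checked on each square $[p,p],[a',p],[p,b'],[a',b']$. If $[a',b']\subseteq t$, all four values are the point, and the point is the pullback of points. Otherwise at least one of $[a',p]$ or $[p,b']$ is not contained in $t$ (since $t$ is an interval, $[a',b']\subseteq t$ exactly when both halves are). So one factor is empty, and the pullback is empty, matching $Z([a',b'])=\emptyset$. Here I use that pullbacks in $\overline{\Grph}$ over a vertex set are computed on vertex sets, with a pullback involving an empty graph being empty. I would confirm this by noting that the empty graph is initial and admits no maps from nonempty graphs.

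Then $\varphi g=\varphi h$, since at $t'\subseteq t$ both send the vertex to $Y(t'\hookrightarrow t)(\varphi_t(v))=Y(t'\hookrightarrow t)(\varphi_t(w))$. Monicity gives $g=h$, and evaluating at $t$ yields $v=w$.
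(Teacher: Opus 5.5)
Your proof is correct, but it proves the necessity direction differently from the paper. The paper's proof says the argument is the same as for Lemma~\ref{lemma:monomorphisms-in-Pe-T-Grefl}. That argument has three steps. The inclusion $\Pe(\timeT,\overline{\Grph})\hookrightarrow[\timeT^\op,\overline{\Grph}]$ is taken to be a right adjoint via sheafification (axiom (T\ref{axiom:T3})), so it preserves monomorphisms. Monomorphisms in a functor category into a category with pullbacks are componentwise. Lemma~\ref{lemma:monomorphisms-in-overline-Grph} then turns componentwise monic into injective.

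You instead separate points directly with the test narrative $Z$: the one-vertex graph on subintervals of $t$ and the empty graph elsewhere. This is the representable at $t$, combined with the fact that the one-vertex graph represents the vertex functor. Your checks are sound:
\begin{itemize}
\item $Z$ is a narrative: if both halves of a cover lie in $t$ then so does their union $[a',b']$, and the empty graph is a strict initial object, so a pullback with an empty leg is empty.
\item $g$ and $h$ are natural and are complement morphisms.
\item $\varphi g=\varphi h$ follows from naturality of $\varphi$, and evaluating $g=h$ at $t$ recovers $v=w$.
\end{itemize}

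Your route does not depend on (T\ref{axiom:T3}) holding for $\overline{\Grph}$. The paper never verifies this, and it even warns that $\overline{\Grph}$ does not satisfy all hypotheses of Theorem~\ref{thm:temporalization-of-spined-sd-category}. The price is that you use concrete features of $\overline{\Grph}$: the representing one-vertex graph and the strict initial empty graph. The paper's route is shorter and uniform across the case studies, but it silently rests on the sheafification adjoint.

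There is also a third route that is as general as the paper's and avoids the adjoint. Pointwise pullbacks of persistent narratives are again persistent narratives, since limits commute with limits. So the kernel pair of $\varphi$ lies in $\Pe(\timeT,\overline{\Grph})$, and monicity there forces the two projections to agree, which makes $\varphi$ monic in the presheaf category.
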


\begin{proof}
	The argument is the same as for Lemma~\ref{lemma:monomorphisms-in-Pe-T-Grefl}. 
\end{proof}

Hence, the $\widehat{\Gamma}$-size $s_{\widehat{\Gamma}}(X)$ of a persistent narrative $X\co \timeT^\op\to\overline{\Grph}$ is the minimum non-negative integer~$n$ for which there exist an object $F$ of~$\widehat{\Omega}_n$ and a natural transformation $\varphi\co X\Rightarrow F$ whose component maps are injective. In other words, $s_{\widehat{\Gamma}}(X)$ is the minimum $n\geq0$ for which there exists a persistent narrative $F\co \timeT^\op\to\overline{\Grph}$ together with a natural transformation $\varphi\co X\Rightarrow F$ such that for each $t\in\Ob(\timeT)$, the map $\varphi_t\co V(X(t))\to V(F(t))$ is injective, and for each $s\in\Ob(\timeS)$, the graph $F(s)$ is edgeless and of order at most~$n$.

\begin{prop}
	The $\widehat{\Gamma}$-size of a persistent narrative $X\co \timeT^\op\to\overline{\Grph}$ is the maximum order of the graphs~$X(s)$ with $s\in\Ob(\timeS)$,
	\[ s_{\widehat{\Gamma}}(X) = \max_{s\in\Ob(\timeS)} \abs{V(X(s))}. \]
\end{prop}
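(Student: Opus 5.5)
The plan is to mirror the proof of Proposition~\ref{prop:case-study-ordinary-tree-width}, with complete reflexive graphs replaced by edgeless graphs and graph morphisms replaced by complement morphisms. By the preceding lemma and the description of $\widehat{\Omega}_n$ from Lemma~\ref{lemma:widehat-Omega-n}, we must show two things. First, any admissible pair $(F,\varphi)$ witnessing size $n$ forces $n\geq\max_{s}\abs{V(X(s))}$. Second, there is such a pair with $n=\max_{s}\abs{V(X(s))}$.

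For the inequality $\geq$, take $n$, $F$ and $\varphi\co X\Rightarrow F$ with each $\varphi_t$ injective on vertex sets and each $F(s)$, for $s\in\Ob(\timeS)$, edgeless of order at most $n$. Injectivity of $\varphi_s$ gives $n\geq\abs{V(F(s))}\geq\abs{V(X(s))}$ for every $s\in\Ob(\timeS)$. Taking the maximum over $s$ yields the claim.

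For the inequality $\leq$, define $F(t)$ to be the edgeless graph on $V(X(t))$ for every $t\in\Ob(\timeT)$, and set $F(i)\coloneqq X(i)$ on vertex sets. Any set map into an edgeless graph is a complement morphism, since the condition ``if $f(v)f(w)$ is an edge then $vw$ is an edge'' is vacuous. Hence each $F(i)$ is a morphism of $\overline{\Grph}$, functoriality is inherited from $X$, and likewise the identity maps $\varphi_t\co V(X(t))\to V(F(t))$ are complement morphisms $X(t)\to F(t)$. Naturality of $\varphi$ is immediate, because on vertex sets every square consists of $X(i)$ and identities. Each $\varphi_t$ is injective, and each $F(s)$ is edgeless of order $\abs{V(X(s))}\leq\max_{s'}\abs{V(X(s'))}$.

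The main obstacle is checking that $F$ is a persistent narrative, that is, that it sends the Johnstone squares to pullbacks in $\overline{\Grph}$. I would argue as follows. Via the isomorphism of categories $\overline{\Grph}\cong\Grph$ given by $G\mapsto\overline{G}$, pullbacks in $\overline{\Grph}$ correspond to pullbacks in $\Grph$ of the complements. Pullbacks in $\Grph$ of simple graphs have as vertex set the set-theoretic pullback, with $(x,y)$ adjacent to $(x',y')$ iff $x\sim x'$ and $y\sim y'$ (loops being forbidden, this is the adjacency making projections morphisms). Consequently, the vertex square of the pullback square $X$ provides is a pullback of finite sets. For $F$, the complements $\overline{F(t)}$ are complete simple graphs. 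The pullback in $\Grph$ of complete graphs over a complete graph has vertex set the set pullback, with $(x,y)\sim(x',y')$ iff $x\neq x'$ and $y\neq y'$. This need not be complete, so $F([a,b])$ may fail to be the pullback.

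I would therefore try either to show that $F([a,b])$ edgeless agrees with this pullback or, if it does not, to choose $F([a,b])$ for intervals outside $\timeT_{\leq1}$ as the genuine iterated pullback in $\overline{\Grph}$, which has the same vertex set. With the second choice I must still verify that each $F(s)$ is edgeless, which may require a restriction on $\timeS$. Alternatively, I would argue in the presheaf category into which the paper says $\overline{\Grph}$ embeds, where pullbacks are computed pointwise. I expect this pullback bookkeeping to be the delicate step.
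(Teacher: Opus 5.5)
Your lower bound and your construction of $F$ and $\varphi$ are exactly the paper's. However, the proposal stops at the step that makes $F$ an admissible witness. You never establish that $F$ is a persistent narrative, and you even suggest it may fail, possibly requiring a restriction on $\timeS$. That would leave the stated equality unproved.

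The doubt comes from an incorrect identification. You use the morphism condition ``if $f(v)f(w)$ is an edge then $vw$ is an edge''. This is the explicit condition in the paper's definition, and the paper relies on it when it notes that $X(i)$ may identify non-adjacent vertices. Under this condition, $G\mapsto\overline{G}$ is \emph{not} an isomorphism $\overline{\Grph}\cong\Grph$. For example, the map sending two non-adjacent vertices to a single vertex is a complement morphism. After complementing, however, it would send the two endpoints of an edge to one vertex, which no morphism of simple graphs does. Conversely, suppose you read complement morphisms strictly as $\Grph$-morphisms $\overline{G}\to\overline{H}$. Then your earlier claim that every vertex map into an edgeless graph is a morphism fails, and so does the claim that $F(i)=X(i)$ is a morphism, as soon as $X(i)$ is non-injective. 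So your argument mixes two incompatible readings, and the non-complete pullback of complete graphs you compute is irrelevant.

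The correct identification is $\overline{\Grph}\cong\Grefl$ via the reflexive complement: put loops everywhere, and let $vw$ with $v\neq w$ be an edge iff $v,w$ are non-adjacent in $G$. Under it, edgeless graphs correspond to complete reflexive graphs, so the proof of Proposition~\ref{prop:case-study-ordinary-tree-width} transfers verbatim. This is the ``similar argument'' the paper invokes.

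You can also check it directly in two steps.
\begin{enumerate}
\item The one-vertex graph represents the vertex-set functor $V\co\overline{\Grph}\to\Set$, because every map out of it is a complement morphism (there are no loops). Hence $V$ preserves pullbacks, and the vertex squares of $X$ on Johnstone squares are pullbacks of sets.
\item The edgeless graph on a set-theoretic pullback, with its projections, is a pullback in $\overline{\Grph}$. Any vertex map into an edgeless graph is a complement morphism, and morphisms are determined by their vertex maps.
\end{enumerate}
Thus $F$ is a persistent narrative with no restriction on $\timeS$ and no need to redefine $F$ via iterated pullbacks. Adding this argument closes your proof.
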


\begin{proof}
	The inequality~$\geq$ can be proved in the same manner as for Proposition~\ref{prop:case-study-ordinary-tree-width}.
	Regarding the reverse inequality, we define $F(t)$ for each $t\in\Ob(\timeT)$ to be the graph with vertex set $V(F(t))\coloneqq V(X(t))$ and empty edge set. For each inclusion map $i\co t\to t'$ in~$\timeT$, we put $F(i)\coloneqq X(i)\co V(F(t'))=V(X(t'))\to V(X(t))=V(F(t))$. Since $F(t)$ has no edges, $F(i)$~is a complement graph morphism. (Note that in contrast to the case of Proposition~\ref{prop:case-study-ordinary-tree-width}, we are not allowed to suppose that $F(t)$ is a reflexive graph instead of a mere graph: If $X(i)$ maps to non-adjacent vertices $v$ and~$w$ of~$X(t')$ to the same vertex~$x$ of $X(t)$, then a loop at~$x$ in~$F(t)$ would force an edge between $v$ and~$w$ in $F(t')$ to ensure that $F(i)$ is a complement morphism.)
	
	A similar argument as in the proof of Proposition~\ref{prop:case-study-ordinary-tree-width} shows that the above defines a persistent narrative $F\co\timeT^\op\to\overline{\Grph}$ 
	and that the identity set maps $\varphi_t\coloneqq\id{}\co V(X(t))\to V(F(t))$ assemble into a natural transformation $\varphi\co X\Rightarrow F$.
	To sum up, we have found a persistent narrative~$F$ on~$\timeT$ with values in $\overline{\Grph}$ and a natural transformation~$\varphi$ from $X$ to~$F$ with injective component maps, and for each $s\in\Ob(\timeS)$, the graph~$F(s)$ has no edges and has order $\abs{V(F(s))}=\abs{V(X(s))}\leq\max_{s'\in\Ob(\timeS)} \abs{V(X(s'))}$. We conclude that $s_{\widehat{\Gamma}}(X)\leq\max_{s\in\Ob(\timeS)} \abs{V(X(s))}$.
\end{proof}

Thus, the $\widehat{\Gamma}$-size of a persistent narrative $X\co \timeT\to\overline{\Grph}$ is the maximum $\Gamma$-size of the graphs~$X(s)$ with $s\in\Ob(\timeS)$:
\[ s_{\widehat{\Gamma}}(X) = \max_{s\in\Ob(\timeS)} \abs{V(X(s))} = \max_{s\in\Ob(\timeS)} s_\Gamma(X(s)). \]

To compute the $\Gamma$-width~$w_\Gamma(d)$ of a structured decomposition $d\co \intJ\to\overline{\Grph}$ for any tree~$J$, note that this induces a $J$-structured decomposition $\overline{d}\co \intJ\to\Grph$ in~$\Grph$ given by $\overline{d}(x)\coloneqq\overline{d(x)}$ for all objects and morphisms~$x$ in $\intJ$. Thus, denoting by~$\Gamma'$ the spined sd-category $(\Grph,\{\textup{trees}\},(\{\textup{complete graphs of order }\leq n\}))$ of \cite[Proposition 3.3.1]{Bumpus-et-al_Structured-Decompositions}, we have
\[ w_\Gamma(d) = \max_{v\in V(J)} s_\Gamma(d(v)) = \max_{v\in V(J)} s_{\Gamma'}(\overline{d(v)}) = \textup{tw}(\overline{d}), \]
which is the ordinary notion of complemented width of a tree-decomposition from graph theory.
Analogously as in the previous case study, we deduce that the $\widehat{\Gamma}$-width of a structured decomposition $d\co\intJ\to\Pe(\timeT,\overline{\Grph})$ is given by
\[ w_{\widehat{\Gamma}}(d) = \max_{s'\in\Ob(\timeS)} w_\Gamma (d_s), \]
that is, the maximum $\Gamma$-width of the component structured decompositions $d_s\co\intJ\to\overline{\Grph}$ over all $s\in\Ob(\timeS)$.

\subsection{Case study: Tree independence number}

In our last case study, we deal with the tree independence number, based on \cite[Section~3.4]{Bumpus-et-al_Structured-Decompositions}. We take again the category $\overline{\Grph}$ of graphs and complement morphisms. For each integer $n\geq0$, we define $\Omega_n^\alpha$ to be the full subcategory of $\overline{\Grph}$ whose objects are the graphs of independence number at most~$n$, where the \textit{independence number}~$\alpha(G)$ of a graph~$G$ is defined as the maximum cardinality of a set of vertices of~$G$ that are pairwise non-adjacent. By \cite[Proposition 3.4.5]{Bumpus-et-al_Structured-Decompositions}, the triple $\Gamma\coloneqq(\overline{\Grph}, \{\textup{trees}\},\Omega^\alpha)$ with $\Omega^\alpha\coloneqq (\Omega_n^\alpha)_{n\in\N}$ is a spined sd-category, and the $\Gamma$-width $w_\Gamma(G)$ of a graph~$G$ is precisely its \textit{tree independence number}~$\textup{tw}_\alpha(G)$. This is the minimum \textit{tree independence number} $\textup{tw}_\alpha(d)$ of the tree decompositions~$d$ of~$G$, where $\textup{tw}(d)$ is defined as
\[ \textup{tw}_\alpha(d) \coloneqq \max_{t\in V(T)} \alpha(d(t)) -1. \]

Given a sub-join-semilattice~$\timeS$ of a finite discrete time category~$\timeT$, we again obtain a spined sd-category $\widehat{\Gamma}\coloneqq \sBigl( \Pe(\timeT,\overline{\Grph}),\{\textup{trees}\},\widehat{\Omega}^\alpha \sBigr)$.
A similar reasoning as in the previous case study shows that the $\Gamma$-size $s_\Gamma(G)$ of a graph~$G$ is precisely its independence number~$\alpha(G)$ and that the $\widehat{\Gamma}$-size $s_{\widehat{\Gamma}}(X)$ of a persistent narrative~$X$ on~$\timeT$ with values in $\overline{\Grph}$ is the maximum independence number of the graphs $X(s)$ with $s\in\Ob(\timeS)$: 

\begin{prop}
	For any persistent narrative $X\co \timeT^\op\to\overline{\Grph}$, its $\widehat{\Gamma}$-size is
	\[s_{\widehat{\Gamma}}(X) = \max_{s\in\Ob(\timeS)} \alpha(X(s)) = \max_{s\in\Ob(\timeS)} s_\Gamma(X(s)). \]
\end{prop}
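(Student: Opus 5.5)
The plan is to prove the two inequalities separately, using the description of $\widehat{\Omega}^\alpha_n$ from Lemma~\ref{lemma:widehat-Omega-n} together with the characterization of monomorphisms in $\Pe(\timeT,\overline{\Grph})$ as the natural transformations with injective components. As in the previous case study, $s_{\widehat{\Gamma}}(X)$ is then the minimum $n\geq0$ for which there exist a persistent narrative $F\co\timeT^\op\to\overline{\Grph}$ and a natural transformation $\varphi\co X\Rightarrow F$ with injective components such that $\alpha(F(s))\leq n$ for all $s\in\Ob(\timeS)$. The second equality $\max_s\alpha(X(s))=\max_s s_\Gamma(X(s))$ is just the static identity $s_\Gamma(G)=\alpha(G)$. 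This identity is the special case of the argument below in which time plays no role, or it can be read off from \cite[Proposition 3.4.5]{Bumpus-et-al_Structured-Decompositions}.

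The key observation is the following. If $f\co G\to H$ is a complement morphism that is injective on vertices, then $f$ maps independent sets of $G$ bijectively onto independent sets of $H$. Indeed, by definition, if $f(v)$ and $f(w)$ are adjacent in $H$, then $v$ and $w$ are adjacent in $G$. Contrapositively, non-adjacent $v,w$ have non-adjacent images. Injectivity then ensures that the images of distinct vertices remain distinct, so the image of an independent set has the same cardinality. Hence $\alpha(G)\leq\alpha(H)$.

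For the inequality $\geq$, take any $n$, $F$ and $\varphi$ as above. For each $s\in\Ob(\timeS)$, the component $\varphi_s\co X(s)\to F(s)$ is an injective complement morphism. By the key observation, $\alpha(X(s))\leq\alpha(F(s))\leq n$. Taking the maximum over $s$ gives $\max_{s}\alpha(X(s))\leq n$, and minimizing over $n$ gives the bound.

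For the inequality $\leq$, put $m\coloneqq\max_{s\in\Ob(\timeS)}\alpha(X(s))$, which is finite because $\timeT$ is finite. No new construction is needed here, in contrast to the earlier case studies: we take $F\coloneqq X$ and $\varphi\coloneqq\id{X}$. Each $X(s)$ with $s\in\Ob(\timeS)$ has independence number at most $m$, so it is an object of $\Omega^\alpha_m$. By Lemma~\ref{lemma:widehat-Omega-n}, $X$ is therefore an object of $\widehat{\Omega}^\alpha_m$. Since the identity is a monomorphism, we get $s_{\widehat{\Gamma}}(X)\leq m$.

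I do not expect a genuine obstacle. The only point needing care is the monotonicity of $\alpha$ under injective complement morphisms, whose direction of implication must be checked against the definition of $\overline{\Grph}$. The other point is to apply Lemma~\ref{lemma:widehat-Omega-n} correctly, which uses that $\Omega^\alpha_m$ is full.
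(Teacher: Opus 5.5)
Your proof is correct and is the natural instantiation of the ``similar reasoning'' the paper points to. The lower bound uses injectivity of the components of a monomorphism, with vertex counting replaced by monotonicity of $\alpha$ under injective complement morphisms, whose direction you check correctly; the upper bound is an explicit witness in $\widehat{\Omega}^\alpha_m$ obtained via Lemma~\ref{lemma:widehat-Omega-n}. The one real adaptation, which you make correctly, is that the witness must be $X$ itself with $\id{X}$ rather than an edgeless replacement as in the complemented tree-width case, since edgeless graphs on $V(X(s))$ would have independence number $\abs{V(X(s))}$.
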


Let $J$ be a tree, and let $d\co\intJ\to\Pe(\timeT,\overline{\Grph})$ be a structured decomposition. By an analogous argument as in the case study~\ref{subsection:case-study-ordinary-tree-width}, the $\widehat{\Gamma}$-width $w_{\widehat{\Gamma}}(d)$ of~$d$ is the maximum $\Gamma$-width of the component structured decompositions $d_s\co\intJ\to\overline{\Grph}$ over all $s\in\Ob(\timeS)$, that is, the maximum tree independence number of $d_s$ for $s\in\Ob(\timeS)$,
\[ w_{\widehat{\Gamma}}(d) = \max_{s\in\Ob(\timeS)} w_\Gamma(d_s) = \max_{s\in\Ob(\timeS)} \textup{tw}_\alpha(d_s). \]

\section{Conclusion and future directions}
\label{section:conclusion}
	
Time-varying data is ubiquitous in many different applications. However, one is often confronted with at least one of the following two challenges: (1) analyzing and comparing data collected in several locations; (2) performing calculations on the temporal data. Both of these situations require a notion of time-varying data that is decomposed into smaller components, as suggested by our temporalized version of spined sd-categories. Having established the formal framework, we may now turn to various interesting prospective questions. Some specific future directions include the following.

\paragraph{The cumulative perspective.}
\label{paragraph:cumulative}

In their paper~\cite{Bumpus-et-al_Time-Varying}, the authors do not only introduce persistent narratives, but also cumulative ones.
Given a discrete time category~$\timeT$ and a category~$\cC$ which admits all pushouts, a \textit{cumulative narrative on~$\timeT$ with values in~$\cC$} is a co-presheaf $F\co \timeT\to\cC$ taking any square in~$\timeT$ as shown on the left-hand side below to a pushout square as shown on the right-hand side below:
\begin{equation*}
	\begin{tikzcd}
		{[p,p]} \arrow[r, hook] \arrow[d, hook] & {[p,b]} \arrow[d, hook]
		\\
		{[a,p]} \arrow[r, hook] & {[a,b]}
	\end{tikzcd}
	\qquad \qquad \qquad
	\begin{tikzcd}
		F([p,p]) \arrow[r] \arrow[d] \arrow[rd, phantom, "\mathlarger{\ulcorner}", very near end] 
		& F([p,b]) \arrow[d]
		\\
		F([a,p]) \arrow[r] 
		& F([a,b]).
	\end{tikzcd}
\end{equation*}
We denote the full subcategory of the category $[T,\cC]$ of co-presheaves $\timeT\to\cC$ whose objects are the $\cC$-valued cumulative narratives on~$\timeT$ by $\Cu(\timeT,\cC)$.
It would be desirable to temporalize structured decompositions and width also from the cumulative point of view. 
More specifically, under dual assumptions of Theorem~\ref{thm:termporalized-sd-category}, we may form a pullback square
\begin{equation*}
	\begin{tikzcd}
		\Cu(\timeT,\cC)\times_{\Cu(\timeS,\cC)}\Cu(\timeS,\Omega_n) \arrow[r] \arrow[d, "\pi_n", swap]
		\arrow[dr, phantom, "\usebox\pullbacksquare", very near start, color=black]
		& \Cu(\timeS,\Omega_n) \arrow[d, "{\Cu(\timeS,\iota_n)}"]
		\\ \Cu(\timeT,\cC) \arrow[r, "{\Cu(\tau,\cC)}", swap]
		& \Cu(\timeS,\cC).
	\end{tikzcd}
\end{equation*}
We define $\check{\Omega}_n\subseteq \Cu(\timeT,\cC)$ to be the image category of the functor~$\pi_n$. 
If we dualize the assumptions of Theorem~\ref{thm:temporalization-of-spined-sd-category}, do we again obtain a spined sd-category of the form $(\Cu(\timeT,\cC),\cG,\check{\Omega})$? 
How does the proof change?

\paragraph{Relation between persistent and cumulative temporalization.}
\label{paragraph:relation-persistent-cumulative}

Let us assume that the above-mentioned procedure indeed provides a well-defined spined sd-category $(\Cu(\timeT,\cC),\cG,\check{\Omega})$. We may then wonder: How do the persistent and the cumulative temporalizations of spined sd-categories interact with each other? 

Given any time category~$\timeT$ and a both complete and cocomplete category~$\cC$, then by \cite[Theorem~2.10]{Bumpus-et-al_Time-Varying}, there exists a pair of adjoint functors 
\begin{equation*}
	\begin{tikzcd}
		\Pe(\timeT,\cC)
		\arrow[bend left=30]{r}
		\arrow[r, phantom, "\raisebox{-0.2ex}{\rotatebox{270}{$\dashv$}}"]
		& \Cu(\timeT,\cC).
		\arrow[bend left=30]{l}
	\end{tikzcd}
\end{equation*} 
It would be interesting to investigate on the following questions, assuming the setting of Theorem~\ref{thm:temporalization-of-spined-sd-category} together with the dual assumptions:

Are the above adjoint functors sd-functors in the sense of \cite[Definition 2.7.1]{Bumpus-et-al_Structured-Decompositions}? 

Are they even width-preserving in the sense of \cite[Definition 2.7.8]{Bumpus-et-al_Structured-Decompositions}? 

If not, can this be guaranteed by modifying the assumptions appropriately?

It would be highly desirable to have an adjoint pair of width-preserving sd-functors between the persistent and cumulative temporalized spined sd-categories. This would not only reveal a fundamental relation between the persistent and cumulative perspectives but also allow a convenient transfer between concepts and results for persistent narratives to cumulative ones and vice versa.

\paragraph{Compatibility of narratives and structured decompositions.}

Let $\timeT$ be a finite discrete time category.
In Section~\ref{subsection:narratives}, we have described the category $\Pe(\timeT,\cC)$ of persistent narratives in a spined sd-category $(\cC,\cG,\Omega)$ with suitable additional structure itself as a spined sd-category. Applying the procedure of \cite[Definition 2.4.1]{Bumpus-et-al_Structured-Decompositions}, we obtain a category $\mathfrak{D}(\Pe(\timeT,\cC))$ of structured decompositions in $\Pe(\timeT,\cC)$, defined as a full subcategory of the category of diagrams.

In contrast, we can start with the category~$\mathfrak{D}(\cC)$ of structured decompositions in~$\cC$ and then consider persistent narratives in it, giving rise to the category $\Pe(\timeT, \mathfrak{D}(\cC))$.
We make the following conjecture.

\begin{conj}\label{thm:embedding-structured-decompositions-persistent-narratives}
	Under appropriate assumptions, the category of structured decompositions of persistent narratives in~$\cC$ embeds into the category of persistent narratives of structured decompositions in~$\cC$, in the sense that there exists a fully faithful functor which is injective on objects
	\[ \mathfrak{D}(\Pe(\timeT,\cC)) \hookrightarrow \Pe(\timeT,\mathfrak{D}(\cC)). \]
\end{conj} 

Investigating on this more thoroughly, we hope to find a specific description of ``appropriate assumptions'' ensuring this conjecture to become true.

\addcontentsline{toc}{section}{References}
\printbibliography
	
\end{document}